\newtheorem{theorem}{Theorem}[section]
\newtheorem{corollary}[theorem]{Corollary}
\newtheorem{definition}[theorem]{Definition}
\newtheorem{proposition}[theorem]{Proposition}
\newenvironment{proof}{{\bf Proof:} } {\hfill $\Box$
\mbox{}}
\def\C{\mathsf{C}}
\def\TC{\mathsf{Top}^{\C}}
\def\E{\mathsf{E}}
\def\Top{\mathsf{Top}}
\def\lim{\mathsf{lim}}
\def\Ker{\mathsf{Ker}}
\def\T{\mathsf{T}}
\begin{document}
\title{\large\bf $G$-compactness for topological groups with operations}
\author[a]{Osman Mucuk\thanks{\textbf{Corresponding Author : }O. Mucuk (e-mail : mucuk@erciyes.edu.tr)}}
\author[b]{H\"{u}seyin \c{C}akallı\thanks{H. \c{C}akallı (e-mail : huseyincakalli@maltepe.edu.tr; hcakalli@gmail.com)}}
\affil[a]{\small{Erciyes University, Faculty of Science, Department of Mathematics,  38039 Kayseri, Turkey}}
\affil[b]{\small{Maltepe University, Graduate School of Science and Engineering, Maltepe, Istanbul-Turkey}}

\maketitle

\noindent{\bf Key Words:} Sequences, $G$-compactness, $G$-hull, G-continuity, $G$-connectedness,  topological group with operations
\\ {\bf Classification:} Primary  40J05; Secondary  54A05, 22A05

\begin{abstract}
It is  well known  that for a Hausdorff topological group $X$, the limits  of convergent sequences in $X$ define a function denoted by $\lim$  from the set of all convergent sequences in $X$ to $X$.  This notion has been modified by Connor and Grosse-Erdmann for real functions by replacing $\lim$ with an arbitrary linear functional $G$ defined on a linear subspace of the vector space of all real sequences. Recently  some authors have   extended the concept to the topological group setting and introduced  the concepts  of $G$-continuity, $G$-compactness and $G$-connectedness.  In this paper we  prove some results on different types of $G$-compactness for topological group with operations which include topological groups, topological rings without identity, R-modules,  Lie algebras, Jordan algebras, and many others.
\end{abstract}

\section*{Introduction}

Sequential convergence is an important tool  in topology and analysis; and hence  one gets into search  to find that  the standard definitions of some concepts such as continuity, compactness and connectedness can be replaced by their sequential definitions and that  many of the properties can be easily derived using sequential arguments.  Besides the ordinary convergence of sequences, there exist a wide variety of convergence types that are very important not only in pure mathematics but also in other branches of science involving mathematics especially in information theory, biological science and dynamical systems.

Following the idea given in a 1946 American Mathematical Monthly
problem \cite{Buck}, a number of authors Posner \cite{Posner}, Iwinski \cite{Iwinski},
Srinivasan \cite{Srinivasan}, Antoni \cite{Antoni}, Antoni and Salat \cite{AntoniandSalat}, Spigel and
Krupnik \cite{SpielandKrupnik} have studied $A$-continuity defined by a regular
summability matrix $A$. Some authors \"{O}zt\"{u}rk \cite{Ozturk}, Sava\c{s}
and Das \cite{SavasandDas}, Sava\c{s}  \cite{Savas}, Borsik and Salat \cite{BorsikandSalat} have studied $A$-continuity for
methods of almost convergence and  for related methods. See also \cite{Boos}
for an introduction to  summability matrices and  \cite{CakalliThorpe} for summability in topological groups.
Di Maio and Ko\v{c}inac \cite{MaioKocinacStatisticalconvergenceintopology} defined statistical convergence in topological spaces, introduced statistically sequential spaces and statistically Fréchet spaces, and considered their applications in selection principles theory, function spaces and hyperspaces.

Connor and Grosse-Erdmann \cite{ConnorGrosse} have investigated the impact of changing the definition of the convergence of sequences on the structure of sequential continuity of real functions introducing $G$-methods defined on a linear subspace of the vector space of real sequences.  \c{C}akall{\i}  extended this concept to topological group setting introducing the concept of $G$-compactness in \cite{CakalliSequentialdefinitionsofcompactness}, obtained further results on $G$-compactness and $G$- continuity in \cite{CakalliOnGcontinuity}(see also  \cite{DikandCanak} and \cite{CakalliNewkindsofcontinuities},  for some other types of continuities which can not be given by any sequential method) and developed the $G$-connectedness of topological groups  in \cite{CakalliGconnectednes} (see also \cite{CakalliandMucukGconnectednes}). Mucuk and \c{S}ahan \cite{MuSaGcont} have introduced the notions of G-open sets and G-neighborhoods of first-countable topological groups, studied the operations of G-closed sets and G-open sets, and investigated G-continuity in topological groups. Lin and Liu in  \cite{Lin-Liub} have recently  extended the G-methods on first-countable topological groups and several convergence methods on topological groups  by introducing the concepts of G-methods, G-submethods and G-topologies on arbitrary sets; and investigated operations on subsets that deal with G-hulls, G-closures, G-kernels and G-interiors.
Mucuk and \c{C}akall{\i} \cite{Mu-Ca-seqconnecttopgpwitoper} recently extended the $G$-connectedness to the  topological groups with operations including topological groups.

On the other hand  Orzech  \cite{Orz} introduced a certain algebraic category $\C$ called category of groups with operations including groups,  rings without identity, R-modules,  Lie algebras, Jordan algebras, and many others. The internal category and crossed  module in $\C$ were studied  in \cite{Por}  and the studies have resumed by the works of Datuashvili \cite{Kanex, Wh, Cohtr,  Coh}.  Recently some works for topological groups with operations and their internal categories  have been  carried out in \cite{Ak-Al-Mu-Sa, Mu-Tu-Na,  Mu-Ak, Mu-Sa, Mu-Be-Tu-Na}.

In this paper we present some results about $G$-continuity  and different kinds of  $G$-compactness for topological groups with operations.

\section{Preliminaries}
Throughout the  paper  $X$  denotes a Hausdorff topological group with operations, the  boldface letters $\bf{x}$, $\bf{y}$, $\bf{z}$, ... represent the sequences $\textbf{x}=(x_{n})$, $\textbf{y}=(y_{n})$, $\textbf{z}=(z_{n})$, ... of terms in $X$; and  $s(X)$ and $c(X)$ respectively denote the set of all sequences and the set of all  convergent sequences  of points in $X$.

By a $G$-method of sequential convergence for $X$, we mean a morphism defined on a subgroup with operations  $c_{G}(X)$ of $s(X)$ into $X$.   A sequence  $\textbf{x}=(x_{n})$ is said to be $G$-{\em convergent} to $\ell$ if $\textbf{x}\in c_{G}(X)$ and $G(\textbf{x})=\ell$. In particular, $\lim$ function defined on  $c(X)$ is a $G$-method with $G=\lim$. A method $G$ is called {\em  regular} if every convergent sequence $\textbf{x}=(x_{n})$ is $G$-convergent with $G(\textbf{x})=\lim \textbf{x}$. A map   $f\colon X\rightarrow X$  is called {\em G-continuous} if $G(f(\textbf{x}))=f(G(\textbf{x}))$ for $\textbf{x}\in c_G(X)$ \cite{CakalliOnGcontinuity}.

The notion of regularity introduced above coincides with the classical notion of regularity for summability matrices (see  \cite{Boos} for an introduction to regular summability matrices and see \cite{Zymund} for a general view of sequences of reals or complex).

Let $A\subseteq  X$ and $\ell \in X$. Then  $\ell$ is said in the $G$-{\em hull} of $A$ if there is a sequence $\textbf{x}=(x_{n})$ of points in $A$ such that $G(\textbf{x})=\ell$ and the $G$-hull of $A$  is denoted by $\overline{A}^G$ in \cite{ConnorGrosse}. Following the notations in \cite{Lin-Liub}, we denote $G$-{\em hull} of a set $A$ by $[A]_G$ and say that  $A$ is {\em $G$-closed} if $[A]_G\subseteq  A$. If $G$ is a regular method, then $A\subseteq  [A]_G$, and hence $A$ is $G$-closed if and only if $[A]_G=A$. Even for regular methods $[[A]_G]_G=[A]_G$  is not always true and the union of any two $G$-closed subsets of $X$ need not also be a $G$-closed subset of $X$  \cite[Counterexample 1]{CakalliOnGcontinuity}.
A subset $U\subseteq X$ is called $G$-{\em open} if $X\setminus U$ is $G$-closed.
If $B\subseteq A\subseteq X$  and $a \in A$, then  we say  $a$  is in the {\em $G$-hull of $B$} in $A$ if there is a sequence $\textbf{x}=(x_{n})$ of points in $B$ such that $G(\textbf{x})=a$.  A subset $F$ of $A$ is called $G$-{\em  closed} in $A$ if  there exists a $G$-closed subset $K$ of $X$ such that $F=K\cap A$. We say that  a subset $U$ of $A$ is  $G$-{\em  open} in $A$ if $A\backslash U$ is $G$-closed in $A$. Here note that  a subset $U$ of $A$ is $G$-open in $A$ if and only if there exists a $G$-open subset $V$ of $X$ such that $U=A\cap V$.
  The union of any $G$-open subsets of $X$ is $G$-open. A subset $V$ is a  {\em $G$-neighborhood} of $a$ if there exists a $U$-open subset of $X$ with $a\in{U}$ such that $U\subseteq   V$.
The union of  $G$-open subsets of $A$ is called {\em $G$-interior} of $A$ and denoted by ${A^\circ}^G$ is also  $G$-open.

  The union of any $G$-open subsets of $X$ is $G$-open. A subset $V$ is a  {\em $G$-neighborhood} of $a$ if there exists a $U$-sequential open subset of $X$ with $a\in{U}$ such that $U\subseteq   V$.
The union of  $G$-open subsets of $A$ is called {\em $G$-interior} of $A$ and denoted by ${A^{\circ}}^G$ is also  $G$-open \cite{MuSaGcont}.

We remark that as it is stated in \cite[Remark 2.2]{Lin-Liub} since the definition of $G$-method  already involves sequences  the term `sequentially' in $G$-sequentially closed sets seems redundant, so they  choose the terminology of $G$-closed sets. By the same idea we use the similar terminology $G$-open sets, $G$-continuity,  $G$-connectedness, $G$-compactness and  etc.

The idea  of the definition of categories of groups with
operations comes from Higgins \cite{Hig} and Orzech \cite {Orz};
and the definition below is from Porter \cite{Por} and Datuashvili \cite[p.21]{Tamar}, which is adapted from Orzech \cite {Orz}.

Let $\C$ be a category   of groups with a set of operations $\Omega$ and with a set $\E$  of identities such that $\E$ includes the group laws, and the following conditions hold: If $\Omega_i$ is the set of $i$-ary operations in $\Omega$, then

1.  $\Omega=\Omega_0\cup\Omega_1\cup\Omega_2$;

2.  The group operations written additively $0,-$ and $+$ are
the  elements of $\Omega_0$, $\Omega_1$ and
$\Omega_2$ respectively. Let $\Omega_2'=\Omega_2\backslash \{+\}$,
$\Omega_1'=\Omega_1\backslash \{-\}$ and assume that if $\star\in
\Omega_2'$, then $\star^{\circ}$ defined by
$x\star^{\circ}y=y\star x$ is also in $\Omega_2'$. Also assume
that $\Omega_0=\{0\}$.

3.  For each   $\star \in \Omega_2'$, $\E$ includes the identity
$x\star (y+z)=x\star y+x\star z$.

4.  For each  $\omega\in \Omega_1'$ and $\star\in \Omega_2' $, $\E$
includes the identities  $\omega(x+y)=\omega(x)+\omega(y)$ and
$\omega(x)\star y=\omega(x\star y)$.

Then the category $\C$ satisfying the conditions (1)-(4) is called a {\em category of groups with operations}.

From now on  $\C$  will be a category of groups with  operations.

A {\em  morphism} between any two objects of $\C$ is a group homomorphism, which preserves the operations of $\Omega_1'$ and $\Omega_2'$.

  The set $\Omega_0$ contains exactly one element, the
group identity; hence for instance the category of associative rings with unit is not a category of  groups with operations. The categories of  groups, rings generally  without identity, R-modules,  associative, associative commutative, Lie, Leibniz, alternative algebras are examples of categories of   groups with operations.

The subobject in the category $\C$ can be defined as follows.
\begin{definition} \label{subgroupwithoperation}\rm   Let $X$ be a group with operations, i.e., an object of $\C$. A subset   $A\subseteq X$
is called a {\em subgroup with operations} subject
to the  the following conditions:

  1.  $a\star b\in A$ for  $a,b\in A$ and  $\star\in\Omega_2$;

  2.  $\omega(a)\in A$ for $a\in A$ and  $\omega\in\Omega_1$.
\end{definition}

The normal subobject in the category $\C$ is defined as follows.
\begin{definition}\label{normalgwop} \rm \cite[Definition 1.7]{Orz} Let $X$ be an object in $\C$
and  $A$ a subgroup with operations of $X$.   $A$ is called a {\em normal
subgroup with operations or ideal}  if

  1. $(A,+)$ is a normal subgroup of $(X,+)$;

2.  $x\star a\in A$ for  $x\in X$, $a\in A$ and $\star\in\Omega_2'$. \end{definition}

The category of topological groups with operations are defined in \cite[pp. 228]{Ak-Al-Mu-Sa} (see also \cite[Definition 3.4]{Mu-Sa}) as follows:

A category $\TC$  of  topological groups with a set $\Omega$ of continuous operations  and with a set $\E$ of identities such  that $\E$ includes the group laws such that the axioms (1)-(4) above are satisfied, is called a {\em category of topological groups with operations}.

A {\em  morphism} between any two objects of $\TC$ is a continuous group homomorphism, which preserves the operations in $\Omega_1'$ and $\Omega_2'$.

The categories of topological groups,  topological rings  and  topological  R-modules are examples of categories of  topological groups with operations.

In the rest of the paper $\TC$ will denote the category of topological groups with operations and $X$ will denote an object of $\TC$; and $G$ will be a regular sequential method unless otherwise  is stated.

\section{$G$-sequentially $\T_1$ topological groups with operations}

From Proposition  \cite[Proposition 2]{Mu-Ca-seqconnecttopgpwitoper} we know that  the subset $\{a\}$ of $X$ for $a\in X$ is a $G$-closed.  Hence this result  can  be more generally  restated as follows.
\begin{proposition} \label{oisclosed} For any subset $A$ of $X$ and a point $a\in A$, the subset $\{a\}$ is $G$-closed in $A$.\end{proposition}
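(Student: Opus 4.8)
The plan is to read off the statement from the absolute case stated just above this proposition, namely that every singleton $\{a\}$ with $a\in X$ is $G$-closed in $X$, by unwinding the definition of ``$G$-closed in $A$''. Recall that a subset $F$ of $A$ is $G$-closed in $A$ exactly when there is a $G$-closed subset $K$ of $X$ with $F=K\cap A$. Hence the whole task reduces to producing such a witness $K$ for the set $F=\{a\}$.

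First I would invoke the quoted result (\cite[Proposition 2]{Mu-Ca-seqconnecttopgpwitoper}, restated immediately before) to obtain that $\{a\}$ is itself a $G$-closed subset of $X$. Choosing $K=\{a\}$ therefore supplies a $G$-closed subset of $X$ without further work. The only remaining step is the elementary observation that, since $a\in A$ gives $\{a\}\subseteq A$, we have $K\cap A=\{a\}\cap A=\{a\}$. Thus $\{a\}=K\cap A$ with $K$ a $G$-closed subset of $X$, and by definition $\{a\}$ is $G$-closed in $A$.

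I do not expect a genuine obstacle: the statement is a formal consequence of the relative definition of $G$-closedness together with the already-established absolute case. The single point deserving explicit mention is the use of the hypothesis $a\in A$, which is precisely what guarantees $\{a\}\cap A=\{a\}$; were $a$ not in $A$ the intersection would be empty and the claimed identity $F=K\cap A$ would break down.
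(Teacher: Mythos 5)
Your proof is correct and matches the paper's (implicit) argument: the paper offers no separate proof, presenting the proposition as an immediate consequence of the cited result that $\{a\}$ is $G$-closed in $X$, which is exactly what you make precise by taking $K=\{a\}$ as the witness in the definition of ``$G$-closed in $A$'' and using $a\in A$ to get $\{a\}=K\cap A$.
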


As a result of this the following corollary can be given.
\begin{corollary} \label{ProGseqT1} Any subset $A$ of $X$ is  $G$-sequentially $\T_1$  in the sense that given any pair of two distinct points $a,b\in A$, each one has a $G$-open neighbourhood in $A$ not containing the other one.\end{corollary}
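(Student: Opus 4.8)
The plan is to obtain the two required separating neighbourhoods directly as complements of singletons, exploiting Proposition~\ref{oisclosed}. Fix two distinct points $a,b\in A$. The key observation is that a $G$-open neighbourhood of $a$ in $A$ that excludes $b$ may be taken to be $A\setminus\{b\}$ itself, and symmetrically a neighbourhood of $b$ excluding $a$ may be taken to be $A\setminus\{a\}$.

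First I would invoke Proposition~\ref{oisclosed}, which guarantees that the singleton $\{b\}$ is $G$-closed in $A$, and likewise that $\{a\}$ is $G$-closed in $A$. By the very definition of a $G$-open subset of $A$ --- namely that $U\subseteq A$ is $G$-open in $A$ precisely when $A\setminus U$ is $G$-closed in $A$ --- it follows at once that $U:=A\setminus\{b\}$ is $G$-open in $A$. Since $a\neq b$ and $a\in A$, we have $a\in U$, while evidently $b\notin U$; thus $U$ is a $G$-open neighbourhood of $a$ in $A$ not containing $b$. Interchanging the roles of $a$ and $b$, the set $V:=A\setminus\{a\}$ is $G$-open in $A$, contains $b$, and omits $a$. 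These two sets supply exactly what the definition of $G$-sequentially $\T_1$ demands.

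I do not anticipate any genuine obstacle here, since all of the real content has already been packaged into Proposition~\ref{oisclosed}. The only point requiring care is to work throughout with the relative notions of $G$-closed and $G$-open \emph{in} $A$, rather than their ambient counterparts in $X$: one must read $A\setminus\{b\}$ as the complement taken inside $A$, so that the equivalence between $G$-openness of a subset of $A$ and $G$-closedness of its complement in $A$ applies verbatim. With that understood, the argument is immediate and, in particular, does not require $a$ or $b$ to be isolated in any stronger topological sense.
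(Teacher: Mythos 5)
Your proof is correct and is essentially identical to the paper's own argument: both apply Proposition~\ref{oisclosed} to conclude that $\{a\}$ and $\{b\}$ are $G$-closed in $A$, and then take $A\setminus\{b\}$ and $A\setminus\{a\}$ as the required $G$-open neighbourhoods. Your added care about working with the relative notions in $A$ is a sensible clarification but does not change the route.
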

\begin{proof} Let $A\subseteq X$ and $a,b\in A$ be  distinct points. Then by Proposition  \ref{oisclosed},  $\{a\}$ and $\{b\}$ are $G$-closed subsets of $A$ and hence $A\backslash \{b\}$ and $A\backslash \{a\}$ are respectively  $G$-open neighbourhoods of $a$ and $b$ not containing the other.  Hence $A$ is a $G$-sequentially $\T_1$ subset of $X$.
\end{proof}

We recall from   \cite{ConnorGrosse} that a method is called {\em subsequential} if,  whenever a sequence $\textbf{x}$ is $G$-convergent with $G(\textbf{x})=\ell$, then there is a subsequence $\textbf{y}$ of $\textbf{x}$ with $\lim \textbf{y}=\ell$.  We say a method $G$  {\em preserves the $G$-convergences of subsequences} if, whenever a sequence $\textbf{x}$ is  $G$-convergent with $G(\textbf{x})=\ell$, then any subsequence of $\textbf{x}$ is also $G$-convergent to the same point $\ell$.

The following result  is useful in some proofs.
\begin{theorem}  {\rm \cite[Theorem 13]{Mu-Ca-seqconnecttopgpwitoper}}  \label{Seqopeninverseimage} Let $G$ be a  method preserving the $G$-convergences of subsequences. Then for the projection map $\pi_1\colon X\times X\rightarrow X,(x,y)\mapsto x$  if $A\subseteq X$ is a $G$-open subset, then ${\pi_1}^{-1}(A)$ is a $G$-open subset in $X\times X$.
\end{theorem}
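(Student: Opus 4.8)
The plan is to reduce the statement to a closedness claim about a product set and then to transport $G$-convergence through the projection. First I would observe that $\pi_1^{-1}(A)=A\times X$, so its complement in $X\times X$ is $(X\times X)\setminus(A\times X)=(X\setminus A)\times X$. Writing $B=X\setminus A$, the hypothesis that $A$ is $G$-open means precisely that $B$ is $G$-closed in $X$. By the definition of $G$-open in $X\times X$, it therefore suffices to prove that $B\times X$ is $G$-closed in $X\times X$, for then $\pi_1^{-1}(A)=A\times X$ is $G$-open.

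To prove that $B\times X$ is $G$-closed I would show $[B\times X]_G\subseteq B\times X$. So let $(a,b)\in[B\times X]_G$; by the definition of the $G$-hull there is a sequence $\textbf{z}=((x_{n},y_{n}))$ of points of $B\times X$ with $G(\textbf{z})=(a,b)$. The heart of the argument is to deduce from this that the first-coordinate sequence $(x_{n})$ is itself $G$-convergent with $G((x_{n}))=a$. Since every term $x_{n}$ lies in $B$, this would give $a\in[B]_G$, and because $B$ is $G$-closed we conclude $a\in[B]_G\subseteq B$; hence $(a,b)\in B\times X$, which is exactly what is required.

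The step that deserves care, and the one for which the hypothesis on $G$ is used, is the passage from the $G$-convergence of $\textbf{z}$ in $X\times X$ to the $G$-convergence of its first coordinate in $X$. Here I would invoke that $G$ preserves the $G$-convergences of subsequences in order to guarantee that the projection $\pi_1$ sends the $G$-convergent sequence $\textbf{z}$ to a $G$-convergent sequence $(x_{n})=(\pi_1(x_{n},y_{n}))$ with limit $\pi_1(a,b)=a$; in effect this is the assertion that $\pi_1$ respects $G$-limits. I expect this to be the main obstacle, since without the subsequence-preservation property there is no a priori reason that the coordinate projection of a $G$-convergent sequence in $X\times X$ is again $G$-convergent to the corresponding coordinate, and the whole reduction would break down at this point. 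Once this projection property is established the remaining steps are purely formal, giving $[B\times X]_G\subseteq B\times X$ and hence the $G$-openness of $\pi_1^{-1}(A)$.
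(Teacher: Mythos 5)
First, a remark on the comparison itself: the paper you are working from does not prove this statement at all; it quotes it as \cite[Theorem 13]{Mu-Ca-seqconnecttopgpwitoper}. So your proposal can only be measured against the argument that citation must contain, which is essentially the skeleton you describe. Your reduction is correct: $\pi_1^{-1}(A)=A\times X$, its complement in $X\times X$ is $(X\setminus A)\times X$, and by the paper's definitions of $G$-open and $G$-closed it suffices to show $[B\times X]_G\subseteq B\times X$, where $B=X\setminus A$ is $G$-closed.

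The genuine problem is in what you yourself call the heart of the argument. The property that $\pi_1$ carries a $G$-convergent sequence of $X\times X$ to a $G$-convergent sequence of $X$ with the projected limit is \emph{not} a consequence of the hypothesis that $G$ preserves the $G$-convergences of subsequences: that hypothesis says only that if $G(\textbf{x})=\ell$ then every subsequence of $\textbf{x}$ is $G$-convergent to $\ell$, and no subsequence of anything ever appears in your argument, so it cannot be what makes the projection step work. What actually makes it work is the definition of the induced method on the product, which you never state: throughout the paper (see the proofs of parts (1)--(3) of Theorem \ref{TogrupT1T2} and of Theorem \ref{Theoprodseqcomp}) a sequence $(\textbf{x},\textbf{y})=((x_n,y_n))$ in $X\times X$ is $G$-convergent precisely when both coordinate sequences are, and then $G(\textbf{x},\textbf{y})=(G(\textbf{x}),G(\textbf{y}))$. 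Once this coordinatewise description is in hand, your key step is immediate: from $G(\textbf{z})=(a,b)$ you get $G((x_n))=a$ by definition, hence $a\in[B]_G\subseteq B$ since $(x_n)$ lies in $B$, and the proof closes. Note, however, that with this repair the subsequence-preservation hypothesis is never used anywhere in your argument, which confirms that your appeal to it was empty; the fix is to replace that appeal by the coordinatewise definition of $G$-convergence in $X\times X$, not to try to derive the projection property from a hypothesis that does not supply it.
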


\begin{theorem} Any subset of $X\times X$ is $G$-sequentially $\T_1$.
\end{theorem}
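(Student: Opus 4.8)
The plan is to recognize that the statement is nothing more than Corollary \ref{ProGseqT1} applied to $X\times X$, once we verify that $X\times X$ is again an object of $\TC$ carrying a regular sequential method. So the first step would be to record that the product $X\times X$ is a Hausdorff topological group with operations: the addition and each $\star\in\Omega_2$, $\omega\in\Omega_1$ act coordinatewise, so every identity of $\E$ valid in $X$ holds coordinatewise in $X\times X$; the operations remain continuous for the product topology; and a product of two Hausdorff spaces is Hausdorff. Hence $X\times X$ is an object of $\TC$.

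The second step would be to transport $G$ to $X\times X$ by the coordinatewise rule $G\big((x_n,y_n)\big)=\big(G(\textbf{x}),G(\textbf{y})\big)$, which we declare to be defined precisely when both $\textbf{x}=(x_n)$ and $\textbf{y}=(y_n)$ lie in $c_G(X)$. Since a sequence in $X\times X$ is the same datum as a pair of sequences in $X$, the domain corresponds to $c_G(X)\times c_G(X)$, a subgroup with operations of $s(X\times X)$, and the coordinatewise map is again a morphism. I would then check regularity: if $(x_n,y_n)\to(\ell_1,\ell_2)$ in $X\times X$, then $\textbf{x}\to\ell_1$ and $\textbf{y}\to\ell_2$ in $X$, so regularity of $G$ on $X$ gives $G(\textbf{x})=\ell_1$ and $G(\textbf{y})=\ell_2$, whence $G\big((x_n,y_n)\big)=(\ell_1,\ell_2)=\lim (x_n,y_n)$.

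With these two steps in place, Corollary \ref{ProGseqT1} applies verbatim to $X\times X$ in the role of $X$ and yields the claim. Unwinding it for a subset $A\subseteq X\times X$ and distinct points $(a_1,a_2),(b_1,b_2)\in A$: by Proposition \ref{oisclosed} (now read with $X\times X$ in place of $X$) the singletons $\{(a_1,a_2)\}$ and $\{(b_1,b_2)\}$ are $G$-closed in $A$, so $A\setminus\{(b_1,b_2)\}$ and $A\setminus\{(a_1,a_2)\}$ are the required $G$-open neighbourhoods of $(a_1,a_2)$ and $(b_1,b_2)$ in $A$, each omitting the other point. The engine behind Proposition \ref{oisclosed} is simply that the only sequence lying in a singleton is the constant one, which a regular method sends to that very point, so the $G$-hull of a singleton in $X\times X$ is the singleton itself.

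I do not expect a genuine obstacle; the one point requiring care is the verification that the induced coordinatewise method on $X\times X$ is really regular (and defined on a subgroup with operations), since ``$G$-sequentially $\T_1$'' for $X\times X$ is meaningful only relative to such a method. Should one instead wish to produce the separating neighbourhoods directly inside $X\times X$, one could, assuming in addition that $G$ preserves the $G$-convergences of subsequences, invoke Theorem \ref{Seqopeninverseimage}: as the two points differ in, say, the first coordinate, the set $X\setminus\{b_1\}$ is $G$-open in $X$ and its preimage ${\pi_1}^{-1}(X\setminus\{b_1\})$ is $G$-open in $X\times X$, contains $(a_1,a_2)$, and misses $(b_1,b_2)$. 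This route, however, requires the extra hypothesis on $G$, whereas the reduction to Corollary \ref{ProGseqT1} needs only regularity.
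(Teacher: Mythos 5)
Your proposal is correct, but it takes a genuinely different route from the paper. The paper's proof stays inside $X$: given distinct $a,b\in A\subseteq X\times X$, it picks a coordinate in which they differ (say the first), separates $\pi_1(a)$ and $\pi_1(b)$ by $G$-open sets $U,V$ in $X$ using Corollary \ref{ProGseqT1}, pulls these back through the projection via Theorem \ref{Seqopeninverseimage} to get $G$-open sets ${\pi_1}^{-1}(U),{\pi_1}^{-1}(V)$ in $X\times X$, and intersects with $A$ --- exactly the alternative you sketch in your final paragraph. You instead promote $X\times X$ itself to an object of $\TC$ equipped with the coordinatewise method $G(\textbf{x},\textbf{y})=(G(\textbf{x}),G(\textbf{y}))$, verify that this method is regular and defined on a subgroup with operations of $s(X\times X)$, and then quote Corollary \ref{ProGseqT1} verbatim for $X\times X$. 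Your route buys something real: Theorem \ref{Seqopeninverseimage} carries the hypothesis that $G$ preserves the $G$-convergences of subsequences, which the theorem under discussion does not state (the blanket assumption of the paper is only regularity), so the paper's proof silently uses an extra hypothesis that your main argument avoids --- you correctly flag this trade-off yourself. Your construction of the product method is also legitimate within the paper's framework, since the identity $G(\textbf{x},\textbf{y})=(G(\textbf{x}),G(\textbf{y}))$ is exactly what the paper uses implicitly throughout (e.g.\ in the proof of Theorem \ref{TogrupT1T2}); making it explicit, together with checking regularity, is the one point where your write-up does work the paper never does. What the paper's approach buys in exchange is concreteness: the separating neighbourhoods come out in the product-compatible form ${\pi_1}^{-1}(U)\cap A$ without having to formalize an induced method on $X\times X$, at the cost of the stronger hypothesis on $G$.
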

\begin{proof} If $A\subseteq X\times X$ and $a,b\in A$ are distinct points, then, say, $\pi_1(a)$ and $\pi_1(b)$ are distinct points; and since by Corollary \ref{ProGseqT1},  $X$ is $G$-sequentially $\T_1$ the points $\pi_1(a)$ and $\pi_1(b)$ have  $G$-open neighbourhoods $U$ and $V$ respectively in $X$ which contain exactly one of these points.  Then by Theorem \ref{Seqopeninverseimage}, the subsets  ${\pi_1}^{-1}(U)$ and ${\pi_1}^{-1}(V)$ are  $G$-open neighbourhoods of $a$ and $b$ respectively in $X$. Hence   ${\pi_1}^{-1}(U)\cap A$ and ${\pi_1}^{-1}(V)\cap A$ are respectively $G$-open neighbourhoods of $a$ and $b$ in $A$ and  $A$ is $G$-sequentially $\T_1$.
\end{proof}

\begin{theorem}\label{TogrupT1T2}  We have the following:

1.  If $f\colon X\rightarrow X$ is a morphism of groups with operations and  $G$-continuous, then $A=\{(x,y)\mid f(x)=f(y)\}$ is a $G$-closed subgroup with operations of $X\times X$.

2.  $\Delta X=\{(x,x)\mid x\in X\}$ is a $G$-closed subgroup with operations of $X\times X$.

3.   The map $\Delta\colon X\rightarrow X\times X, x\mapsto (x,x)$ is a $G$-closed morphism of groups with operations.

4.  For the $G$-continuous morphisms
$f,g\colon X\rightarrow X$ of  groups with operations, $A=\{x\in X\mid  f(x)=g(x)\}$ is a $G$-closed subgroup with operations of $X$.

5.  For a $G$-continuous morphism
$f\colon X\rightarrow X$ of topological groups with operations
$\Ker f=\{x\in X\mid  f(x)=0\}$
is a $G$-closed normal subgroup with operations of $X$.
\end{theorem}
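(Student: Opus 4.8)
The plan is to prove part (1) from first principles and then obtain the other four parts as corollaries or minor variants of it. For the subgroup-with-operations claim in part (1), I would take two points $(x_1,y_1),(x_2,y_2)\in A$, so $f(x_1)=f(y_1)$ and $f(x_2)=f(y_2)$, together with an operation $\star\in\Omega_2$. Since the operations on $X\times X$ act coordinatewise, $(x_1,y_1)\star(x_2,y_2)=(x_1\star x_2,\,y_1\star y_2)$, and because $f$ is a morphism it preserves $\star$, so $f(x_1\star x_2)=f(x_1)\star f(x_2)=f(y_1)\star f(y_2)=f(y_1\star y_2)$; hence the product lies in $A$. The identical computation with $\omega\in\Omega_1$ settles the unary operations, so $A$ is a subgroup with operations of $X\times X$.

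For the $G$-closedness in part (1), I would let $\big((x_n,y_n)\big)$ be a sequence of points of $A$ that is $G$-convergent to $(\ell,m)$. Reading the product method coordinatewise, this says $G(\textbf{x})=\ell$ and $G(\textbf{y})=m$, so by $G$-continuity of $f$ we get $f(\ell)=f\big(G(\textbf{x})\big)=G\big(f(\textbf{x})\big)$ and likewise $f(m)=G\big(f(\textbf{y})\big)$. But $f(x_n)=f(y_n)$ for every $n$ means the sequences $f(\textbf{x})$ and $f(\textbf{y})$ are literally the same, so $G\big(f(\textbf{x})\big)=G\big(f(\textbf{y})\big)$ and therefore $f(\ell)=f(m)$; thus $(\ell,m)\in A$ and $[A]_G\subseteq A$. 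Part (2) is then the case $f=\mathrm{id}_X$, and part (4) is the same argument carried out in one variable, where $f(x_n)=g(x_n)$ forces $f(\textbf{x})=g(\textbf{x})$ and hence $f(\ell)=g(\ell)$ for any $G$-limit $\ell$ of a sequence drawn from $A$; alternatively, $A=(f,g)^{-1}(\Delta X)$ realises it as a preimage of the set from part (2).

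For part (3) I would read ``$G$-closed morphism'' as the assertion that $\Delta$ sends $G$-closed sets to $G$-closed sets; that $\Delta$ is a morphism is immediate from $(x\star y,x\star y)=(x,x)\star(y,y)$ and $(\omega(x),\omega(x))=\omega\big((x,x)\big)$. Given a $G$-closed $K\subseteq X$ and a sequence $\big((x_n,x_n)\big)$ in $\Delta(K)$ that $G$-converges to $(\ell,m)$, the two coordinates are one and the same sequence $\textbf{x}$, forcing $\ell=G(\textbf{x})=m$; since $K$ is $G$-closed this gives $\ell\in K$, so $(\ell,m)=(\ell,\ell)\in\Delta(K)$. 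Part (5) follows from part (4) with $g$ the zero morphism $x\mapsto 0$, which is a $G$-continuous morphism because $G$ is regular and hence sends the constant sequence $\textbf{0}$ to $0$; this already yields that $\Ker f$ is a $G$-closed subgroup with operations. To promote it to an ideal in the sense of Definition \ref{normalgwop}, normality of $(\Ker f,+)$ in $(X,+)$ is the standard fact about kernels of group homomorphisms, while for the second condition I would fix $a\in\Ker f$, $x\in X$, $\star\in\Omega_2'$ and use axiom (3) with $y=z=0$ to obtain $x\star 0=0$, so that $f(x\star a)=f(x)\star f(a)=f(x)\star 0=0$ and $x\star a\in\Ker f$.

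The one point that genuinely needs care is the behaviour of the method on the product: every closedness argument above rests on a $G$-convergent sequence in $X\times X$ having its $G$-limit computed coordinatewise, that is $G\big((x_n,y_n)\big)=\big(G(\textbf{x}),G(\textbf{y})\big)$. I would therefore make this coordinatewise description of the product $G$-method explicit, as part of the structure of $X\times X$ as an object of $\TC$, before invoking $G$-continuity; once that is in place, every step reduces to the morphism identities for $f$ and $g$ together with the regularity of $G$.
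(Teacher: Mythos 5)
Your proposal is correct, and its core computations are exactly those of the paper: reading the product method coordinatewise, using $G$-continuity to turn the sequence identity $f(\textbf{x})=f(\textbf{y})$ into $f(\ell)=f(m)$, and verifying the operations via the morphism identities. The organisational difference is that the paper proves all five parts independently by direct sequence arguments, whereas you derive part (2) as the case $f=\mathrm{id}_X$ of part (1), and part (5) from part (4) by taking $g$ to be the zero morphism; this is a legitimate economy (the identity and zero maps are indeed $G$-continuous morphisms, the latter because regularity gives $G(\textbf{0})=0$, and $0\star 0=0$, $\omega(0)=0$ follow from the axioms), and it buys shorter proofs at the small cost of checking that those two maps qualify. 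In two places you are in fact more careful than the paper: in part (5) the paper simply asserts $f(x)\star 0=0$, while you justify $x\star 0=0$ from axiom (3) with $y=z=0$ together with cancellation in the additive group; and the paper silently writes $G(\textbf{x},\textbf{y})=(G(\textbf{x}),G(\textbf{y}))$, whereas you explicitly flag that this coordinatewise description of the method on $X\times X$ must be built into the structure before any of the closedness arguments can run. Both points merit the explicit treatment you give them, so your write-up would stand as a complete proof.
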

\begin{proof} 1.  If $(\textbf{x},\textbf{y})=(x_n, {y_n})$ is a sequence of the points of $A$ such that $G(\textbf{x},\textbf{y})=(G(\textbf{x}),G(\textbf{y}))=(u,v)$, then   $G(\textbf{x})=u$ and $G(\textbf{y})=v$; and hence  by the $G$-continuity of $f$ we have that
\[G(f(\textbf{x}))=f(G(\textbf{x}))=f(u)\]
and \[ G(f(\textbf{y}))=f(G(\textbf{y}))=f(v).\]
Since $f(\textbf{x})=f(\textbf{y})$ we have  $f(u)=f(v)$ and therefore $(u,v)\in A$. That proves $A$ is $G$-closed.

Moreover since  $f$ is a morphism of groups with operations, whenever  $(x,y),(x',y')\in A$ and  $\omega\in \Omega_2$, then we have $(x,y)+(x',y')=(x+x',y+y')\in A$ by \[f(x+x')=f(x)+f(x')=f(y)+f(y')=f(y+y')\] and $\omega(x,y)=(\omega(x),\omega(y))\in A$ by
\[f(\omega(x))=\omega(f(x))=\omega(f(y))=f(\omega(y)).\]
Hence $A$ becomes  a group with operations.

2.  If $(\textbf{x},\textbf{x})=(x_n,x_n)$ is a sequence of points  of $\Delta X$ with  $G( \textbf{x}, \textbf{x})=(u,v)$, then  $G( \textbf{x}, \textbf{x})=(G( \textbf{x}), G(\textbf{x}))=(u,v)$ and hence $(u,v)\in \Delta X$. That means $\Delta X$ is $G$-closed. It is straightforward to prove that $\Delta X$ becomes a subgroup with operations in which binary and unary operations are  defined by
$(x,x)\star (y,y)=(x\star y,x\star y)$ and $\omega(x,x)=(\omega(x),\omega(x))$.

3.   Let $A$ be a $G$-closed subset of $X$.  We prove that  $\Delta A=\{(a,a)\mid a\in A\}$ is a $G$-closed subset of $X\times X$.  If $(\textbf{a},\textbf{a})=(a_n,a_n)$ is a sequence of the points  of $\Delta A$ with  $G( \textbf{a}, \textbf{a})=(u,v)$, then  $G( \textbf{a}, \textbf{a})=(G( \textbf{a}), G(\textbf{a}))=(u,v)$ and hence  $(u,v)\in \Delta X$. Since  $G( \textbf{a})=u$, $G( \textbf{a})=v$; and  $A$ is $G$-closed we conclude that $(u,v)\in \Delta A $.   Therefore  $\Delta A$ is $G$-closed.
Moreover by the fact that  the category $\TC$ of the topological groups with operations have the products, the product  $X\times X$ becomes  a topological group with operations. it is obvious that $\Delta$ is a morphism of groups with operations since
\begin{align*}
\Delta (x\star y)&=(x\star y, x\star y)\\
                 &=\Delta(x)\star \Delta(y)
\end{align*}
and
\begin{align*}
\Delta(\omega(x))&=(\omega(x),\omega(x))\\
                  &=\omega(x,x)\\
                  &=\omega(\Delta(x))
\end{align*}
for $\star\in \Omega_2$ and $\omega\in \Omega_1$.

4.  Let $\textbf{x}=(x_n)$ be a sequence of  points of  $A$ which is $G$-convergent to $u$. Then  $f(\textbf{x})=g(\textbf{x})$;  and by the $G$-continuity of $f$  and $g$ one concludes that
\begin{align*}
G(f(\textbf{x}))&=G(g(\textbf{x})) \\
f(G(\textbf{x}))&=g(G(\textbf{x})) \\
              f(u)&=g(u).
\end{align*}
That means $u\in A$ and hence $A$ is $G$-closed.

Further  since $f$ and $g$ are the morphism of groups with operations for $x,y\in A$ and $\star\in \Omega_2$ we have
\begin{align*}
f(x\star y)&=f(x)\star f(y)\\
            &=g(x)\star g(y)\\
            &=g(x\star y)
\end{align*}
which implies   $x\star y \in A$; and for $\omega\in \Omega_1$ we have
\begin{align*}
f(\omega(x))&=\omega(f(x))\\
             &=\omega(g(x))\\
             &=g(\omega(x))
  \end{align*}
which means  $\omega(x)\in A$. Hence  by Definition \ref{subgroupwithoperation}, $A$ becomes a group with operations.

5.  Let $\textbf{x}=(x_n)$ be a sequence of  points of  $\Ker f$  which is $G$-convergent to $u\in X$. Hence $f(\textbf{x})=(0,0,\dots)$ is a zero sequence and since $G$ is regular we have $G(f(\textbf{x}))=0$.   Hence by the $G$-continuity of $f$  we obtain that
\[0=G(f(\textbf{x}))=f(G(\textbf{x}))=f(u)\]
That concludes $u\in \Ker f$ and hence $\Ker f $  is $G$-closed.

Moreover  $(\Ker f,+)$ is a normal subgroup of $X$; and for $\star\in {\Omega_2}'$, $x\in X$ and $a\in A$ one has
  \begin{align*}
  f(x\star a)&=f(x)\star f(a)\\
             &=f(x)\star 0\\
             &=0
   \end{align*}
  which means $x\star a\in A$. Hence  Definition \ref{normalgwop},  $\Ker f$ becomes a normal subgroup with operations of $X$.
\end{proof}

\section{G-locally compactness of topological groups with operations}
We remark  that the results about $G$-locally compactness of this section are new even in topological group case.

Recall from \cite[Definition 1]{CakalliSequentialdefinitionsofcompactness} that a subset $A$ of $X$ is called $G$-{\em compact} whenever any sequence  $\textbf{x}=(x_n)$ of points in $A$ has a subsequence $\textbf{y}=(x_{n_k})$ with $G(\textbf{y})=\ell\in A$.

In the following theorem we prove that the product of two $G$-compact subsets is also a $G$-compact.

\begin{theorem}\label{Theoprodseqcomp} Let $G$ be a   method preserving  the $G$-convergence of subsequences.  Then the  product of two $G$-compact subsets of $X$ is also  $G$-compact.
\end{theorem}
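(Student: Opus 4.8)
The plan is to prove this by a standard diagonal-type extraction of subsequences, exactly mirroring the classical proof that a product of sequentially compact spaces is sequentially compact, but carried out in the $G$-convergence setting. Let $A$ and $B$ be the two $G$-compact subsets, and consider an arbitrary sequence $(\textbf{z}) = ((a_n, b_n))$ of points in $A \times B$. The goal is to produce a subsequence that is $G$-convergent to a point lying in $A \times B$.

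First I would apply $G$-compactness of $A$ to the first-coordinate sequence $\textbf{a} = (a_n)$. This yields a subsequence $(a_{n_k})$ with $G((a_{n_k})) = \ell_1 \in A$. Next I would pass to the corresponding second-coordinate subsequence $(b_{n_k})$, which is a sequence of points in $B$, and apply $G$-compactness of $B$ to extract a further subsequence $(b_{n_{k_j}})$ with $G((b_{n_{k_j}})) = \ell_2 \in B$. The resulting index set $(n_{k_j})$ gives a subsequence $((a_{n_{k_j}}, b_{n_{k_j}}))$ of the original sequence in $A \times B$.

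The crucial point is to conclude that this final subsequence is $G$-convergent with limit $(\ell_1, \ell_2)$. Here the hypothesis that $G$ preserves the $G$-convergence of subsequences does the essential work: the first-coordinate subsequence $(a_{n_{k_j}})$ is a subsequence of $(a_{n_k})$, which $G$-converges to $\ell_1$, so by the preservation property $(a_{n_{k_j}})$ is itself $G$-convergent to $\ell_1$; meanwhile $(b_{n_{k_j}})$ was chosen to $G$-converge to $\ell_2$. I would then invoke the componentwise behaviour of $G$ on the product, namely that $G$ applied to a sequence of pairs equals the pair of the $G$-values of the coordinate sequences, precisely as used in the proof of Theorem \ref{TogrupT1T2}. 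This gives $G((a_{n_{k_j}}, b_{n_{k_j}})) = (\ell_1, \ell_2) \in A \times B$, completing the argument.

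The main obstacle, and the reason the preservation hypothesis is indispensable, is the second extraction step: without knowing that passing to a subsequence preserves $G$-convergence of the first coordinate, the double extraction would destroy the convergence $G((a_{n_k})) = \ell_1$ established in the first step, and one could not assemble the two coordinate limits into a single limit of the product sequence. The remaining verifications, that $G$ acts coordinatewise on product sequences and that the extracted indices form a genuine subsequence, are routine and parallel the computations already appearing in Theorem \ref{TogrupT1T2}.
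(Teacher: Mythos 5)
Your proposal is correct and follows essentially the same argument as the paper's own proof: extract a subsequence making the first coordinate $G$-converge in $A$, then a further subsequence making the second coordinate $G$-converge in $B$, use the hypothesis that $G$ preserves $G$-convergence of subsequences to retain the first-coordinate limit, and conclude via the componentwise action of $G$ on the product sequence. The only difference is notational (explicit indices $n_{k_j}$ versus the paper's projections $\pi_1(\textbf{z})$, $\pi_2(\textbf{z})$), so nothing further is needed.
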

\begin{proof} Let $A$ and $B$ be $G$-compact subsets of $X$ and $\textbf{x}$  a sequence of  points in $A\times B$.  By the $G$-compactness of $A$, we can choose a subsequence  $\textbf{y}$ of $\textbf{x}$ such that $G(\pi_1(\textbf{y}))=u\in A$ and by the $G$-compactness of $B$ choose a subsequence $\textbf{z}$ of $\textbf{y}$ such that $G(\pi_2(\textbf{z}))=v\in B$. Since $G$ preserves the $G$-convergences of subsequences we have $G(\pi_1(\textbf{z}))= G(\pi_1(\textbf{y}))=u$ and hence
\begin{align*}
G(\textbf{z})&=(G(\pi_1(\textbf{z})),G(\pi_2(\textbf{z})))\\
              &=(u,v)\in  A\times B.
\end{align*}

This proves that $A\times B$ is $G$-compact.
\end{proof}

\begin{theorem}\label{Theoclosedseqcompact} If   $G$ is a  method   preserving  the $G$-convergence of subsequences and $X$ is $G$-compact, then any $G$-closed subset of $X\times X$ is still $G$-compact.
\end{theorem}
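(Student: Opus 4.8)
The plan is to reduce the claim to the standard principle that a $G$-closed subset of a $G$-compact set is again $G$-compact, having first upgraded the hypothesis on $X$ to one on $X\times X$. The first step is to invoke Theorem \ref{Theoprodseqcomp} with both factors taken to be $X$: since $X$ is $G$-compact and $G$ preserves the $G$-convergence of subsequences, that theorem immediately yields that $X\times X$ is $G$-compact. This is where the subsequence-preservation hypothesis is really consumed; it does not reappear in the remainder of the argument.

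Next, let $F$ be a $G$-closed subset of $X\times X$, and let $\textbf{x}$ be an arbitrary sequence of points of $F$. Regarding $\textbf{x}$ as a sequence in $X\times X$ and using the $G$-compactness just established, I would extract a subsequence $\textbf{z}$ of $\textbf{x}$ with $G(\textbf{z})=\ell$ for some $\ell\in X\times X$. The terms of $\textbf{z}$ still lie in $F$, so by the definition of the $G$-hull we have $\ell\in[F]_G$; and since $F$ is $G$-closed, $[F]_G\subseteq F$, whence $\ell\in F$. Thus every sequence in $F$ admits a subsequence that is $G$-convergent to a point of $F$, which is exactly the definition of $G$-compactness for $F$.

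The argument is short, and the only genuinely load-bearing step is the last one: one must ensure that the $G$-limit of the extracted subsequence returns to $F$, and this is precisely where the $G$-closedness of $F$ enters through the inclusion $[F]_G\subseteq F$. The main point requiring care is therefore to confirm that $\textbf{z}$ is genuinely a sequence of points of $F$ (it is, being a subsequence of $\textbf{x}$) so that the $G$-hull inclusion applies. No compatibility with the product structure of $X\times X$ nor with the operations of $\C$ is needed, since $G$-compactness is a purely sequential property; the group-with-operations structure plays no role in this particular statement beyond guaranteeing, via Theorem \ref{Theoprodseqcomp}, that $X\times X$ is $G$-compact.
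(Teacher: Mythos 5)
Your proof is correct, and it follows the same overall route as the paper: first obtain $G$-compactness of $X\times X$ from Theorem \ref{Theoprodseqcomp}, then use $G$-closedness of the subset to force the $G$-limit of an extracted subsequence back into the set. The organizational difference is worth pointing out, though. The paper's proof, after invoking Theorem \ref{Theoprodseqcomp}, never actually uses the $G$-compactness of $X\times X$: it re-runs the componentwise extraction from scratch (a subsequence $\textbf{y}$ of $\textbf{x}$ with $G(\pi_1(\textbf{y}))=a$ from the $G$-compactness of the factor $X$, then a further subsequence $\textbf{z}$ with $G(\pi_2(\textbf{z}))=b$), and appeals to the subsequence-preservation hypothesis a second time to conclude $G(\textbf{z})=(a,b)$, so its opening sentence is logically idle. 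Your version consumes the subsequence-preservation hypothesis exactly once, inside the invocation of Theorem \ref{Theoprodseqcomp}, and then finishes with the abstract principle that a $G$-closed subset of a $G$-compact set is $G$-compact, expressed through the $G$-hull inclusion $[F]_G\subseteq F$. This is a cleaner factorization: the product theorem becomes genuinely load-bearing, the closedness step is isolated as a one-line application of the definitions, and the argument would transfer verbatim to any $G$-compact ambient set, not just $X\times X$.
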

\begin{proof}  If $X$ is $G$-compact, then by Theorem \ref{Theoprodseqcomp}  $X\times X$ is $G$-compact.  If $A$ is a $G$-closed subset of $X\times X$ and  $\textbf{x}$ is  a sequence of  points in $A$, then by the $G$-compactness of $X$ we can choose a subsequence  $\textbf{y}$ of $\textbf{x}$ such that $G(\pi_1(\textbf{y}))=a$ and  choose a subsequence $\textbf{z}$ of $\textbf{y}$ such that $G(\pi_2(\textbf{z}))=b$. Since $G$ preserves the $G$-convergence of subsequences we have $G(\pi_1(\textbf{z}))= G(\pi_1(\textbf{y}))$ and hence   \begin{align*}
G(\textbf{z})&=(G(\pi_1(\textbf{z})),G(\pi_2(\textbf{z})))\\
              &=(a,b).
\end{align*}
Since $A$ is $G$-closed, $(a,b)\in A$ which proves that $A$ is $G$-compact.
\end{proof}

\begin{corollary}\label{Corclosedcompat}  If $X$ is $G$-compact, then the following are satisfied:

1. If $f\colon X\rightarrow X$ is a morphism of groups with operations and  $G$-continuous, then $A=\{(x,y)\mid f(x)=f(y)\}$ is a $G$-compact subgroup with operation of $X\times X$.

2.  $\Delta X=\{(x,x)\mid x\in X\}$ is a $G$-compact  subgroup with operations of $X\times X$.

3.  For the $G$-continuous morphisms
$f,g\colon X\rightarrow X$ of  groups with operations, $A=\{x\in X\mid  f(x)=g(x)\}$ is a $G$-compact subgroup with operations of $X$.
\end{corollary}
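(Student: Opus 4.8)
The plan is to derive all three parts of Corollary~\ref{Corclosedcompat} as immediate consequences of the two compactness theorems just established (Theorem~\ref{Theoprodseqcomp} and Theorem~\ref{Theoclosedseqcompact}) together with the $G$-closedness facts from Theorem~\ref{TogrupT1T2}. The unifying observation is that each of the three sets in question was \emph{already} shown in Theorem~\ref{TogrupT1T2} to be a $G$-closed subgroup with operations; so the only new content here is upgrading ``$G$-closed'' to ``$G$-compact'' under the standing hypothesis that $X$ is $G$-compact. Since $G$ is assumed to preserve the $G$-convergence of subsequences (the hypothesis needed for Theorems~\ref{Theoprodseqcomp} and \ref{Theoclosedseqcompact}), this upgrade is exactly what Theorem~\ref{Theoclosedseqcompact} supplies for $G$-closed subsets of $X\times X$.

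For parts (1) and (2), I would argue as follows. By Theorem~\ref{TogrupT1T2}(1), the set $A=\{(x,y)\mid f(x)=f(y)\}$ is a $G$-closed subgroup with operations of $X\times X$; and by Theorem~\ref{TogrupT1T2}(2), $\Delta X=\{(x,x)\mid x\in X\}$ is likewise a $G$-closed subgroup with operations of $X\times X$. In both cases the set is a $G$-closed subset of $X\times X$, so since $X$ is $G$-compact, Theorem~\ref{Theoclosedseqcompact} immediately gives that each is $G$-compact. Combined with the subgroup-with-operations structure already recorded in Theorem~\ref{TogrupT1T2}, this yields the stated conclusions verbatim.

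Part (3) needs one small extra step, because $A=\{x\in X\mid f(x)=g(x)\}$ is a subset of $X$, not of $X\times X$, whereas Theorem~\ref{Theoclosedseqcompact} is phrased for $G$-closed subsets of $X\times X$. The cleanest route is to note that $X$ itself, viewed through the diagonal, or directly as a $G$-compact space, carries the analogue of Theorem~\ref{Theoclosedseqcompact}: if $X$ is $G$-compact and $G$ preserves $G$-convergence of subsequences, then any $G$-closed subset of $X$ is $G$-compact. Indeed, given a sequence $\textbf{x}$ of points in the $G$-closed set $A$, the $G$-compactness of $X$ furnishes a subsequence $\textbf{y}$ with $G(\textbf{y})=\ell\in X$, and $G$-closedness of $A$ forces $\ell\in A$; this is the one-factor version of the argument in Theorem~\ref{Theoclosedseqcompact} and requires no new ideas. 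Since Theorem~\ref{TogrupT1T2}(4) already establishes that $A$ is a $G$-closed subgroup with operations of $X$, applying this one-factor statement finishes part (3).

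The main obstacle, such as it is, is purely a matter of bookkeeping rather than genuine difficulty: one must be careful that Theorem~\ref{Theoclosedseqcompact} is literally stated for $X\times X$, so for part (3) I should either invoke the self-evident one-factor analogue (as above) or embed $A$ into $X\times X$ via the diagonal $\Delta$ and use that $\Delta X$ is $G$-compact by part (2). I prefer the direct one-factor argument, since it avoids having to verify that the diagonal image of a $G$-closed set is $G$-closed in the product; but either is routine given the machinery already in place. No part of the argument introduces a new hypothesis beyond what is carried over from the referenced theorems.
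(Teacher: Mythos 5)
Your proposal is correct and follows essentially the same route as the paper: parts (1) and (2) combine Theorem~\ref{TogrupT1T2} with Theorems~\ref{Theoprodseqcomp} and \ref{Theoclosedseqcompact}, and part (3) combines Theorem~\ref{TogrupT1T2}(4) with the fact that a $G$-closed subset of a $G$-compact $X$ is $G$-compact (which the paper cites from \cite[Theorem 1]{CakalliSequentialdefinitionsofcompactness} and you prove inline, with the same one-line argument). Your explicit flagging of the subsequence-preservation hypothesis is a useful clarification, since the corollary's statement leaves it implicit.
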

\begin{proof} The proofs of (1) and (2) are obtained as a result of Theorems \ref{TogrupT1T2}, \ref{Theoprodseqcomp} and  \ref{Theoclosedseqcompact}; and the proof of (3) is obtained as a result of Theorem \ref{TogrupT1T2} and \cite[Theorem 1]{CakalliSequentialdefinitionsofcompactness}.
\end{proof}

The following theorem is proved in \cite[Theorem 2]{CakalliSequentialdefinitionsofcompactness} in the case where $G$ is a regular subsequential method.
\begin{theorem}\label{Theoseqcompactseqclosed} If  $G$ is a  method  preserving  the $G$-convergence of subsequences, then   any $G$-compact subset of $X$ is $G$-closed.
\end{theorem}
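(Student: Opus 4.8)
The plan is to show that if $A$ is a $G$-compact subset of $X$ and $\ell$ belongs to the $G$-hull $[A]_G$, then $\ell \in A$; this establishes $[A]_G \subseteq A$, which is exactly the definition of $A$ being $G$-closed. First I would take an arbitrary $\ell \in [A]_G$ and unfold what this means: there exists a sequence $\textbf{x}=(x_n)$ of points in $A$ with $G(\textbf{x})=\ell$. The goal is to locate $\ell$ inside $A$.

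The key step is to invoke the $G$-compactness of $A$ applied to this very sequence $\textbf{x}$. Since $\textbf{x}$ is a sequence of points in the $G$-compact set $A$, by definition it has a subsequence $\textbf{y}=(x_{n_k})$ with $G(\textbf{y})=m$ for some $m\in A$. Now I have two pieces of convergence information: the original sequence $G$-converges to $\ell$, and a subsequence $G$-converges to $m\in A$. The hypothesis that $G$ preserves the $G$-convergence of subsequences is precisely what lets me connect these: since $G(\textbf{x})=\ell$, every subsequence of $\textbf{x}$ is also $G$-convergent to $\ell$, and in particular $G(\textbf{y})=\ell$.

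At this point I would conclude by uniqueness of the $G$-limit: from $G(\textbf{y})=m$ and $G(\textbf{y})=\ell$ I get $\ell=m\in A$. Here it is worth noting that $G$ is a well-defined morphism $c_G(X)\to X$, so each sequence in its domain has a single value under $G$; that value cannot be both $m$ and $\ell$ unless they coincide. Thus $\ell\in A$, and since $\ell\in[A]_G$ was arbitrary we obtain $[A]_G\subseteq A$, so $A$ is $G$-closed.

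The main obstacle to watch for is the role of the hypothesis on $G$. Without the assumption that $G$ preserves the $G$-convergence of subsequences, the subsequence $\textbf{y}$ extracted from $G$-compactness need not satisfy $G(\textbf{y})=\ell$, and the argument collapses; this is exactly why the statement is phrased for such methods rather than merely for regular or subsequential ones, and why the earlier version in \cite[Theorem 2]{CakalliSequentialdefinitionsofcompactness} required $G$ to be subsequential. I would make sure the write-up explicitly appeals to the subsequence-preservation property at the crucial step, since that single hypothesis is what forces $\ell$ and $m$ to agree.
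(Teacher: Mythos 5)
Your proposal is correct and follows essentially the same argument as the paper's proof: take a sequence in $A$ with $G$-value $\ell$, extract a subsequence $G$-converging to a point of $A$ via $G$-compactness, and use the hypothesis that $G$ preserves the $G$-convergence of subsequences to identify the two values. Your write-up merely makes explicit two points the paper leaves implicit, namely that $A$ being $G$-closed means $[A]_G\subseteq A$ and that $G$, being a function, assigns a unique value to each sequence in its domain.
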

\begin{proof} Let $A$ be a $G$-compact subset of $X$ and  $\textbf{x}$  a sequence of the points in $A$ with  $G(\textbf{x})=u$.  Since $A$ is $G$-compact, there is a subsequence $\textbf{y}$ of $\textbf{x}$ such that $G(\textbf{y})=v\in A$. Since $G$  preserves  the $G$-convergence of subsequences,  $G(\textbf{x})=G(\textbf{y})$ and hence $u\in A$. Hence $A$ is $G$-closed.
\end{proof}

\begin{theorem}\label{Theocopseqclosed} If  $G$ is a   method   preserving the $G$-convergence of subsequences, then   any $G$-compact subset of $X\times X$ is $G$-closed.
\end{theorem}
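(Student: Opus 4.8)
The plan is to recognise this statement as the exact analogue of Theorem~\ref{Theoseqcompactseqclosed}, with $X\times X$ playing the role of $X$. As already used in the proof of Theorem~\ref{TogrupT1T2}(3), the category $\TC$ has products, so $X\times X$ is itself an object of $\TC$; moreover the method $G$ acts on sequences in $X\times X$ coordinatewise, via $G(\textbf{z})=(G(\pi_1(\textbf{z})),G(\pi_2(\textbf{z})))$, and is again regular on the product. Thus, once the hypothesis of Theorem~\ref{Theoseqcompactseqclosed} is checked for $X\times X$, that theorem applies verbatim and yields the claim.

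First I would verify that $G$ preserves the $G$-convergence of subsequences on $X\times X$. Suppose $\textbf{z}$ is $G$-convergent with $G(\textbf{z})=(u,v)$, so that $G(\pi_1(\textbf{z}))=u$ and $G(\pi_2(\textbf{z}))=v$. For any subsequence $\textbf{w}$ of $\textbf{z}$, the sequences $\pi_1(\textbf{w})$ and $\pi_2(\textbf{w})$ are subsequences of $\pi_1(\textbf{z})$ and $\pi_2(\textbf{z})$ respectively, so the hypothesis on $X$ gives $G(\pi_1(\textbf{w}))=u$ and $G(\pi_2(\textbf{w}))=v$, whence $G(\textbf{w})=(u,v)$. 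Hence the subsequence-preservation property passes from $X$ to $X\times X$, and Theorem~\ref{Theoseqcompactseqclosed} becomes directly applicable to the product.

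Equivalently, I would run the argument of Theorem~\ref{Theoseqcompactseqclosed} directly in the product. Given a $G$-compact $A\subseteq X\times X$ and a sequence $\textbf{z}$ of points of $A$ with $G(\textbf{z})=(u,v)$, the $G$-compactness of $A$ supplies a subsequence $\textbf{w}$ of $\textbf{z}$ with $G(\textbf{w})=(a,b)\in A$; the subsequence-preservation established above forces $G(\textbf{w})=G(\textbf{z})=(u,v)$, so $(u,v)=(a,b)\in A$, and therefore $A$ is $G$-closed.

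I do not expect a genuine obstacle here beyond bookkeeping: the whole content is that $X\times X$ is again a topological group with operations on which $G$ retains both regularity and the subsequence-preservation property, so the single-factor theorem transfers. The only point requiring care is the coordinatewise reduction of the subsequence-preservation property, which is immediate, since projecting a subsequence produces a subsequence of the projection in each coordinate.
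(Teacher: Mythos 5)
Your proposal is correct, and its second paragraph is essentially the paper's own proof: take a sequence $\textbf{x}$ in the $G$-compact set $A\subseteq X\times X$ with $G(\textbf{x})=(u,v)$, extract by $G$-compactness a subsequence $\textbf{w}$ with $G(\textbf{w})\in A$, and use preservation of $G$-convergence of subsequences to identify $G(\textbf{x})=G(\textbf{w})$, so the limit lies in $A$. Where you genuinely add something is in the first paragraph: the paper applies the subsequence-preservation hypothesis \emph{directly to the coordinatewise method on $X\times X$} without comment, whereas you explicitly verify that this property transfers from $X$ to the product (a subsequence of $\textbf{z}$ projects to subsequences of $\pi_1(\textbf{z})$ and $\pi_2(\textbf{z})$, so each coordinate $G$-converges to the same limit). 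With that transfer lemma in hand, you can indeed dispense with a separate proof altogether and quote Theorem \ref{Theoseqcompactseqclosed} for the object $X\times X$ of $\TC$, since the category has products (as already used in Theorem \ref{TogrupT1T2} and Theorem \ref{Theoprodseqcomp}). So your route buys two things: it makes explicit a step the paper leaves tacit, and it exhibits the product statement as a corollary of the single-space statement rather than a parallel re-run of the same argument; the paper's version, in exchange, is self-contained and avoids having to discuss what the induced method on $X\times X$ is.
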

\begin{proof}   Let $A$ be a $G$-compact  subset of $X\times X$ and  $\textbf{x}$  a sequence of  points in $A$ with $G(\textbf{x})=(a,b)$.   By the $G$-compactness of $A$, choose a subsequence  $\textbf{y}$ of $\textbf{x}$ such that $G(\textbf{y})=(u,v)\in A$. Since the method $G$  preserves  the $G$-convergence of subsequences we have  $G(\textbf{x})=G(\textbf{y})$ and  $(a,b)\in A$. Hence $A$ is $G$-closed.
\end{proof}

As a result of Theorems \ref{Theoclosedseqcompact} and \ref{Theocopseqclosed} we can state the following corollary.
\begin{corollary}\label{Corseqcompclosed} If  $X$ is  $G$-compact and $G$ is a  method    preserving   the $G$-convergence of subsequences, then a subset of $X\times X$ is $G$-compact if and only if it is $G$-closed.
\end{corollary}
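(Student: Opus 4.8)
The plan is to prove Corollary~\ref{Corseqcompclosed} by simply combining the two theorems that immediately precede it, since together they give the two directions of the desired biconditional. Concretely, I would fix the standing hypotheses: $X$ is $G$-compact and $G$ preserves the $G$-convergence of subsequences. Under these hypotheses, by Theorem~\ref{Theoprodseqcomp} the product $X\times X$ is itself $G$-compact, which is the ambient fact that makes both implications meaningful.

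For the forward direction, suppose $A\subseteq X\times X$ is $G$-compact. Then Theorem~\ref{Theocopseqclosed}, whose hypothesis is exactly that $G$ preserves the $G$-convergence of subsequences, applies verbatim and yields that $A$ is $G$-closed. No additional work is needed here; this direction does not even use the $G$-compactness of $X$.

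For the reverse direction, suppose $A\subseteq X\times X$ is $G$-closed. Here I would invoke Theorem~\ref{Theoclosedseqcompact}, which states precisely that when $X$ is $G$-compact and $G$ preserves the $G$-convergence of subsequences, every $G$-closed subset of $X\times X$ is $G$-compact. This is exactly the conclusion we want, so again the implication is immediate. Assembling the two directions gives the stated equivalence.

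I do not expect any genuine obstacle, since the corollary is a formal packaging of the two theorems and both hypotheses of the corollary ($X$ being $G$-compact and $G$ preserving subsequential $G$-convergence) are precisely the hypotheses the two theorems require. The only point worth checking is bookkeeping: the forward implication leans on Theorem~\ref{Theocopseqclosed} and the reverse on Theorem~\ref{Theoclosedseqcompact}, and one must be careful to cite the correct theorem for each direction rather than swapping them. The proof is therefore essentially a one-line reference to the two preceding results.
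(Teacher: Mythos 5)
Your proposal is correct and matches the paper exactly: the paper states this corollary ``as a result of Theorems \ref{Theoclosedseqcompact} and \ref{Theocopseqclosed}'', using Theorem \ref{Theocopseqclosed} for the $G$-compact $\Rightarrow$ $G$-closed direction and Theorem \ref{Theoclosedseqcompact} for the converse, precisely as you do.
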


\begin{theorem} If  $G$ is a  method preserving the convergence of subsequences and $X$ is  $G$-compact, then  any $G$-continuous map $f\colon X\rightarrow X$ is $G$-closed.
\end{theorem}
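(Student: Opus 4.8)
The plan is to unwind the definition of a $G$-closed map: $f$ is $G$-closed exactly when $f(A)$ is $G$-closed for every $G$-closed subset $A\subseteq X$. So I would fix an arbitrary $G$-closed subset $A$ and prove $[f(A)]_G\subseteq f(A)$; that is, starting from a sequence $\textbf{y}=(y_n)$ of points in $f(A)$ with $G(\textbf{y})=\ell$, I must exhibit a point $a\in A$ with $f(a)=\ell$.

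For each $n$ I would choose a preimage $x_n\in A$ with $f(x_n)=y_n$, producing a sequence $\textbf{x}=(x_n)$ of points in $A$. Since $X$ is $G$-compact, $\textbf{x}$ admits a subsequence $\textbf{x}'=(x_{n_k})$ with $G(\textbf{x}')=u$ for some $u\in X$, and since $A$ is $G$-closed while $\textbf{x}'$ is a sequence of points of $A$ that $G$-converges to $u$, the inclusion $[A]_G\subseteq A$ forces $u\in A$. The $G$-continuity of $f$ then yields $G(f(\textbf{x}'))=f(G(\textbf{x}'))=f(u)$. On the other hand $f(\textbf{x}')=(y_{n_k})$ is a subsequence of $\textbf{y}$, so the hypothesis that $G$ preserves the $G$-convergence of subsequences gives $G(f(\textbf{x}'))=\ell$. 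Equating the two values of the single quantity $G(f(\textbf{x}'))$ gives $\ell=f(u)\in f(A)$, so $f(A)$ is $G$-closed.

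The argument is genuinely short, and the only delicate point is the interplay between the two passages to subsequences. The $G$-compactness of $X$ is what supplies a $G$-limit $u$ of $\textbf{x}'$, and the $G$-closedness of $A$ is what keeps $u$ inside $A$; but neither of these by itself tells me anything about $\ell$. It is precisely the subsequence-preservation property that bridges the gap, guaranteeing that the induced subsequence $(y_{n_k})$ of the $G$-convergent sequence $\textbf{y}$ still has $G$-limit $\ell$, so that the two computations of $G(f(\textbf{x}'))$ can be matched. Keeping straight which hypothesis is invoked at which passage is the main thing to get right.
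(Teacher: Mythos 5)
Your proof is correct, but it follows a genuinely different route from the paper's. The paper's proof is modular and citation-based: it notes that a $G$-closed subset $A$ of the $G$-compact space $X$ is itself $G$-compact \cite[Theorem 1]{CakalliSequentialdefinitionsofcompactness}, that the $G$-continuous image $f(A)$ of a $G$-compact set is $G$-compact \cite[Theorem 7]{CakalliSequentialdefinitionsofcompactness}, and finally that a $G$-compact subset is $G$-closed under the subsequence-preservation hypothesis (Theorem~\ref{Theoseqcompactseqclosed}); hence $f(A)$ is $G$-closed. You instead inline all of this into one self-contained sequence chase: lifting $\textbf{y}$ to a sequence in $A$, extracting a single subsequence by $G$-compactness of $X$, trapping its $G$-limit $u$ in $A$ by $G$-closedness, and then reconciling the two computations of $G(f(\textbf{x}'))$ --- namely $f(u)$ via $G$-continuity and $\ell$ via subsequence-preservation. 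Your version buys self-containedness (no appeal to the two external theorems, whose proofs it essentially reproduces) and a small economy: one subsequence extraction rather than the two that the unfolded modular argument would perform; note also that you never need the intermediate fact that $A$ itself is $G$-compact, only that $X$ is. The paper's version buys brevity and reusability, exhibiting the statement as an immediate corollary of standard $G$-compactness facts. Both arguments deploy the same hypotheses at the same junctures (compactness to extract, closedness to trap the limit, continuity and subsequence-preservation to identify it), so the mathematical content coincides once the citations are unfolded.
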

\begin{proof} Let $A$ be a $G$-closed subset of $X$.  Since $X$ is $G$-compact by \cite[Thorem 1]{CakalliSequentialdefinitionsofcompactness}, $A$ is $G$-compact. Since  $f$ is $G$-continuous by \cite[Theorem 7]{CakalliSequentialdefinitionsofcompactness},  $f(A)$ is $G$-Compact. Hence by Theorem \ref{Theoseqcompactseqclosed},  $f(A)$ is $G$-closed.
\end{proof}

\begin{theorem}  Let  $G$ be a  method preserving the convergence of subsequences.  If $A$ is a  $G$-compact subgroup with operations of   $X$ and   $f\colon X\rightarrow X$ is $G$-continuous, then the graph set  $B=\{(a,f(a))\mid  a\in A\}$ is a $G$-compact  subgroup with operations of $X\times X$.
\end{theorem}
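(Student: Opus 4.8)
The plan is to prove the two assertions in the conclusion separately: first that $B$ is $G$-compact as a subset of $X\times X$, and then that $B$ is closed under the operations of $\Omega$, i.e.\ a subgroup with operations of $X\times X$.

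For the $G$-compactness I would take an arbitrary sequence $\textbf{w}=((a_n,f(a_n)))$ of points of $B$, where $(a_n)$ is a sequence of points of $A$. Since $A$ is $G$-compact, there is a subsequence $(a_{n_k})$ of $(a_n)$ with $G((a_{n_k}))=u$ for some $u\in A$. I then pass to the corresponding subsequence $\textbf{z}=((a_{n_k},f(a_{n_k})))$ of $\textbf{w}$. Its first projection $\pi_1(\textbf{z})=(a_{n_k})$ is $G$-convergent to $u$, while its second projection is $\pi_2(\textbf{z})=f(\pi_1(\textbf{z}))$, so the $G$-continuity of $f$ yields
\[ G(\pi_2(\textbf{z}))=G(f(\pi_1(\textbf{z})))=f(G(\pi_1(\textbf{z})))=f(u). \]
Reading $G$ coordinatewise on the product, exactly as in the proof of Theorem \ref{Theoprodseqcomp}, I obtain
\[ G(\textbf{z})=\big(G(\pi_1(\textbf{z})),G(\pi_2(\textbf{z}))\big)=(u,f(u))\in B, \]
where membership in $B$ follows from $u\in A$. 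Thus every sequence in $B$ has a subsequence that is $G$-convergent to a point of $B$, which is precisely $G$-compactness.

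For the algebraic part I would use that $f$ is a morphism of groups with operations, so that the graph is genuinely closed under $\Omega$. Given $(a,f(a)),(a',f(a'))\in B$ and $\star\in\Omega_2$, the element
\[ (a,f(a))\star(a',f(a'))=(a\star a',f(a)\star f(a'))=(a\star a',f(a\star a')) \]
lies in $B$, since $a\star a'\in A$ because $A$ is a subgroup with operations and $f(a)\star f(a')=f(a\star a')$ because $f$ is a morphism. Similarly, for $\omega\in\Omega_1$,
\[ \omega(a,f(a))=(\omega(a),\omega(f(a)))=(\omega(a),f(\omega(a)))\in B, \]
so by Definition \ref{subgroupwithoperation} the set $B$ is a subgroup with operations of $X\times X$.

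The step I expect to be the main obstacle is the treatment of the second coordinate: one must feed the already-chosen subsequence $(a_{n_k})$ into $f$ and invoke $G$-continuity \emph{on that subsequence}, so that both projections of $\textbf{z}$ converge simultaneously to their intended limits. I would also point out that, in contrast with Theorem \ref{Theoprodseqcomp}, only a single application of $G$-compactness is needed here (to the first coordinate), so the hypothesis that $G$ preserves the $G$-convergence of subsequences is not actually invoked for the first projection; it is the $G$-continuity of $f$ that controls the second. Finally, since the graph of a map that is merely $G$-continuous need not be closed under $\Omega$, the subgroup conclusion forces one to read the hypothesis on $f$ as that of a $G$-continuous morphism of groups with operations.
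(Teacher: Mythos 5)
Your proposal is correct, but it takes a genuinely different and in fact more economical route than the paper. The paper's own proof first quotes an earlier result to conclude that $B$ is a subgroup with operations, then handles compactness by a double extraction: it notes that $f(A)$ is $G$-compact (citing the fact that $G$-continuous images of $G$-compact sets are $G$-compact), extracts a subsequence $\textbf{y}$ of the given sequence with $G(\pi_1(\textbf{y}))=u\in A$, extracts a further subsequence $\textbf{z}$ of $\textbf{y}$ with $G(\pi_2(\textbf{z}))=v\in f(A)$, invokes the hypothesis that $G$ preserves the $G$-convergence of subsequences to keep $G(\pi_1(\textbf{z}))=u$, and only at the end uses $G$-continuity of $f$ to show $v=f(u)$, so that $(u,v)\in B$. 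You instead extract a single subsequence using the $G$-compactness of $A$ alone, and let $G$-continuity compute the limit of the second coordinate directly, $G(\pi_2(\textbf{z}))=G(f(\pi_1(\textbf{z})))=f(u)$; this bypasses both the $G$-compactness of $f(A)$ and, as you correctly observe, any use of the subsequence-preservation hypothesis, so your argument actually establishes the theorem under weaker assumptions (given the paper's definition of $G$-continuity, which presupposes that $f$ carries $c_G(X)$ into $c_G(X)$). Both arguments share the same implicit convention that $G$ acts coordinatewise on sequences in $X\times X$, as in Theorem \ref{Theoprodseqcomp}. Your direct verification of the subgroup property via Definition \ref{subgroupwithoperation} replaces the paper's citation, and your closing remark is also apt: the statement's hypothesis must be read as ``$G$-continuous morphism of groups with operations,'' since $G$-continuity alone does not make the graph closed under the operations; the paper leans on the cited result, which carries that assumption.
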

\begin{proof} We know by \cite[Theorem 2.19]{Mu-Ca-seqconnecttopgpwitoper} that $B$ is a subgroup with operations of $X\times X$. Hence we  need just to prove that $B$ is $G$-compact.  Let  $\textbf{x}$ be  a sequence of points in $B$. Since $A$ is $G$-compact subset and $f$ is $G$-continuous by \cite[Theorem 7]{CakalliSequentialdefinitionsofcompactness}, the image $f(A)$ is $G$-compact. As similar to the proof of Theorem \ref{Theoprodseqcomp}, by the $G$-compactness of $A$, choose a subsequence  $\textbf{y}$ of $\textbf{x}$ such that $G(\pi_1(\textbf{y}))=u\in A$ and by the $G$-compactness of $f(A)$ choose a subsequence $\textbf{z}$ of $\textbf{y}$ such that $G(\pi_2(\textbf{z}))=v\in f(A)$. Since $G$ preserves the $G$-convergences of subsequences we have $G(\pi_1(\textbf{z}))= G(\pi_1(\textbf{y}))=u$ and hence  \[G(\textbf{z})=(G(\pi_1(\textbf{z})),G(\pi_2(\textbf{z})))=(u,v).\]
Since $\textbf{z}$ is a sequence in $B$, $f(\pi_1(\textbf{z}))=\pi_2(\textbf{z})$. Hence the $G$-continuity of $f$ and  $u=G(\pi_1(\textbf{z}))$ imply  that
\begin{align*} f(u)&=f(G(\pi_1(\textbf{z})))\\
                 &=G(f(\pi_1(\textbf{z})))\\
                  &=G(\pi_2(\textbf{z}))\\
                  &=v
\end{align*}
and that $(u,v)\in B$. This proves that $ B$ is $G$-compact.
\end{proof}

We can give the definition of  $G$-locally compactness for topological groups with operations as follows:
\begin{definition}{\em A topological group with  operations $X$ is called {\em $G$-locally compact} if every point of $X$ has a fundamental system of  $G$-compact neighbourhoods.}\end{definition}

\begin{theorem} \label{Closedsubsetloccompact} If  $X$ is $G$-locally compact, then any $G$-closed  subset of $X$ is also  $G$-locally compact.
\end{theorem}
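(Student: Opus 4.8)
The plan is to transfer a fundamental system of $G$-compact neighbourhoods from $X$ to the subset $A$ by intersecting with $A$. Fix a point $a\in A$. Since $X$ is $G$-locally compact, $a$ has a fundamental system $\mathcal{B}$ of $G$-compact neighbourhoods in $X$. I claim that the traces $\{V\cap A\mid V\in\mathcal{B}\}$ form a fundamental system of $G$-compact neighbourhoods of $a$ in $A$, which is precisely what $G$-local compactness of $A$ requires. Three things must be checked: that each $V\cap A$ is a $G$-neighbourhood of $a$ in $A$, that each $V\cap A$ is $G$-compact, and that every $G$-neighbourhood of $a$ in $A$ contains some $V\cap A$.

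For the neighbourhood property, given $V\in\mathcal{B}$ there is a $G$-open subset $U$ of $X$ with $a\in U\subseteq V$; then $U\cap A$ is $G$-open in $A$, contains $a$, and lies inside $V\cap A$, so $V\cap A$ is indeed a $G$-neighbourhood of $a$ in $A$. For the fundamental-system property, let $W$ be any $G$-neighbourhood of $a$ in $A$. Then there is a subset $U'$ that is $G$-open in $A$ with $a\in U'\subseteq W$, and by the characterisation of $G$-open subsets of $A$ recalled in the Preliminaries we may write $U'=A\cap V'$ for some $G$-open $V'$ in $X$. Since $a\in U'\subseteq V'$, the set $V'$ is a $G$-neighbourhood of $a$ in $X$, so some $V\in\mathcal{B}$ satisfies $V\subseteq V'$; hence $V\cap A\subseteq V'\cap A=U'\subseteq W$, as required.

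The main step, and the only place where the hypothesis that $A$ is $G$-closed is used, is showing that each $V\cap A$ is $G$-compact. Let $\textbf{x}=(x_n)$ be a sequence of points in $V\cap A$. Since $V$ is $G$-compact, there is a subsequence $\textbf{y}$ of $\textbf{x}$ with $G(\textbf{y})=\ell\in V$. Every term of $\textbf{y}$ lies in $A$, so $\ell\in[A]_G$, and because $A$ is $G$-closed we have $[A]_G\subseteq A$, giving $\ell\in A$. Thus $\ell\in V\cap A$, which proves that $V\cap A$ is $G$-compact. This argument, that the intersection of a $G$-compact set with a $G$-closed set is again $G$-compact, is the heart of the proof; the remaining bookkeeping about neighbourhoods is routine. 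I expect the only delicate point to be keeping the subspace notions ($G$-open in $A$, $G$-neighbourhood in $A$) straight and invoking the correct characterisation. It is worth noting that this $G$-compactness step needs nothing beyond $G$-closedness of $A$ and $G$-compactness of $V$; in particular it does not require $G$ to preserve the $G$-convergence of subsequences, so the conclusion holds for an arbitrary regular method $G$.
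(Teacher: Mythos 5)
Your proof is correct and follows essentially the same route as the paper's: both reduce the theorem to the observation that the intersection of the $G$-closed set $A$ with a $G$-compact neighbourhood $K$ of $a$ in $X$ is again $G$-compact (via the subsequence argument, using only $[A]_G\subseteq A$), and then handle the subspace-neighbourhood bookkeeping through the characterisation $U=V\cap A$ of $G$-open sets in $A$. The only difference is presentational --- the paper starts from an arbitrary $G$-open neighbourhood of $a$ in $A$ and produces one $G$-compact neighbourhood inside it, while you verify that the whole trace system $\{V\cap A\}$ is a fundamental system --- and your closing remark that no subsequence-preservation hypothesis is needed is consistent with the paper, whose statement likewise omits it.
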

\begin{proof} Let $X$ be $G$-locally compact and  $A$  a $G$-closed subset.  If  $a\in A$ and $U$ is a $G$-open neighbourhood of $a$ in $A$, then  by the paragraph following  \cite[Definition 2]{CakalliSequentialdefinitionsofcompactness}  $U$ can be written as $U=V\cap A$ for a $G$-open neighbourhood $V$ of $a$ in $X$. Since $X$ is $G$-locally compact there is a $G$-compact neighbourhood $K$ of $a$ such that $K\subseteq V$.  Then  $A\cap K\subseteq A\cap V$ and here $A\cap K$ is $G$-compact neighbourhood of $a$  as a $G$-closed subset of $G$-compact set $K$, and hence $A$ is $G$-locally compact.
\end{proof}

\begin{theorem} If $X$ is a $G$-locally compact and $G$ is a  method which preserves the $G$-convergence of  subsequences, then $X\times X$ is $G$-locally compact.
\end{theorem}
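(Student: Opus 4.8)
The plan is to furnish each point $(a,b)\in X\times X$ with a fundamental system of $G$-compact neighbourhoods consisting of products $K_1\times K_2$, where $K_1$ runs over a fundamental system of $G$-compact neighbourhoods of $a$ in $X$ and $K_2$ over one of $b$; both systems exist because $X$ is $G$-locally compact. Each such product is $G$-compact by Theorem~\ref{Theoprodseqcomp}, since $K_1$ and $K_2$ are $G$-compact and $G$ preserves the $G$-convergence of subsequences. It then remains to see that $K_1\times K_2$ is a $G$-neighbourhood of $(a,b)$ and that these products are cofinal in the $G$-neighbourhoods of $(a,b)$.

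For the neighbourhood property I would first establish the auxiliary fact that the product $V_1\times V_2$ of two $G$-open subsets of $X$ is $G$-open in $X\times X$. Setting $C_i=X\setminus V_i$, the complement of $V_1\times V_2$ equals $(C_1\times X)\cup(X\times C_2)$, and I would check this union is $G$-closed directly: if a sequence $(x_n,y_n)$ in it satisfies $G(x_n,y_n)=(u,v)$, then $G(x_n)=u$ and $G(y_n)=v$, and since every index has $x_n\in C_1$ or $y_n\in C_2$, at least one of these holds for infinitely many $n$; passing to that subsequence, which still $G$-converges to the corresponding limit because $G$ preserves $G$-convergence of subsequences, and using that $C_1$ or $C_2$ is $G$-closed, one gets $u\in C_1$ or $v\in C_2$, so that $(u,v)$ lies back in the union. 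Granting this, a $G$-compact neighbourhood $K_i$ contains a $G$-open set about the relevant coordinate, whence $K_1\times K_2$ contains a $G$-open box around $(a,b)$ and is therefore a $G$-neighbourhood of $(a,b)$.

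The step I expect to be the main obstacle is cofinality: showing that an arbitrary $G$-open neighbourhood $W$ of $(a,b)$ contains a box $U_1\times U_2$ with $U_1\ni a$ and $U_2\ni b$ both $G$-open. Once such a box is found, the $G$-local compactness of $X$ produces $G$-compact neighbourhoods $K_1\subseteq U_1$ and $K_2\subseteq U_2$, and $K_1\times K_2\subseteq W$ completes the argument. The difficulty is that, in contrast with the ordinary product topology, the $G$-open sets of $X\times X$ need not be closed under finite intersection and are not obviously generated by boxes, so this containment cannot be taken for granted; I would handle it by arguing that the boxes of $G$-open sets (which are $G$-open by the auxiliary fact, and whose projections are controlled through Theorem~\ref{Seqopeninverseimage}) form a base for the $G$-open neighbourhoods of $(a,b)$, or by reducing to the underlying product topology of $X\times X$ as an object of $\TC$ when neighbourhoods are read in the ambient topology.
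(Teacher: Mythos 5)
You are right that the cofinality step is the crux, and it is a genuine gap in your proposal: nothing you write establishes that an arbitrary $G$-open neighbourhood $W$ of $(a,b)$ contains a box $U_1\times U_2$ of $G$-open sets, and neither of your suggested remedies works as stated. Arguing that ``boxes form a base'' is not a remedy but a restatement of the missing lemma, and there is no reason to expect it to hold: as the paper notes, the union of two $G$-closed sets need not be $G$-closed, so $G$-open sets are not closed under finite intersection and the usual base arguments from product topology collapse. Reducing to the ambient product topology also changes the statement: for a regular method, every $G$-closed set is sequentially closed, so $G$-open sets form a (generally proper) subfamily of the sequentially open sets; a neighbourhood base in the ambient topology therefore does not yield a base of $G$-neighbourhoods. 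Your auxiliary fact (the product of two $G$-open sets is $G$-open, via the decomposition of the complement as $(C_1\times X)\cup(X\times C_2)$ and a subsequence argument) is fine, and is a nice self-contained substitute for the paper's citation of an external result; but it only delivers the ``neighbourhood'' half of the argument, not the ``fundamental system'' half.

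For comparison, the paper's proof routes around your obstacle differently: it invokes the cited result that the projections $\pi_1,\pi_2\colon X\times X\rightarrow X$ are $G$-open maps, so that $\pi_1(U)$ and $\pi_2(U)$ are $G$-open neighbourhoods of $x$ and $y$; it then chooses $G$-compact neighbourhoods $F\subseteq \pi_1(U)$ and $K\subseteq \pi_2(U)$ and concludes that $F\times K$ is a $G$-compact neighbourhood of $(x,y)$ with $F\times K\subseteq U$. You should note, however, that this final containment is exactly the point your proposal stalls on, and it does not follow from the stated inclusions: $F\subseteq \pi_1(U)$ and $K\subseteq \pi_2(U)$ only give $F\times K\subseteq \pi_1(U)\times\pi_2(U)$, and this product contains $U$ rather than being contained in it. Already for $G=\lim$ on $X=\mathbb{R}$, taking $U=\{(s,t)\mid |s-t|<1\}$, $F=[0,1]$ and $K=[5,6]$ satisfies all the inclusions used in the proof while $F\times K$ is disjoint from $U$. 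So your diagnosis of where the real difficulty lies is accurate; closing it would require showing that $G$-open neighbourhoods in $X\times X$ can be shrunk to boxes (or some substitute for this), a step that neither your proposal nor the published argument supplies.
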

\begin{proof} Let $(x,y)\in X\times X$ and $U$  a $G$-open neighbourhood of $(x,y)$. Since by Theorem  \cite[Theorem 10]{Mu-Ca-seqconnecttopgpwitoper}  the projection map $\pi_1$ and $\pi_2$  are $G$-open, the subsets  $\pi_1(U)$ and $\pi_2(U)$ are respectively $G$-open neighbourhood of $x$ and $y$. Since $X$ is $G$-locally compact, there are $G$-compact neighbourhoods $F$ and $K$  of $x$ and $y$ respectively such that $F \subseteq\pi_1(U)$ and $K\subseteq \pi_2(U)$.  Then   by \cite[Theorem 2.18]{Mu-Ca-seqconnecttopgpwitoper} and Theorem \ref{Theoprodseqcomp} $F\times K$ is a $G$-compact neighbourhood of $(x,y)$ and $F\times K\subseteq U$. Hence $X\times X$ is $G$-locally compact.
\end{proof}

We recall form  \cite[Definition 2]{CakalliSequentialdefinitionsofcompactness} that  a point $x\in X$ is called a {\em $G$-accumulation point} of $A$  if there is a sequence $\textbf{a}=(a_n)$ of points in $A\backslash \{x\}$ such that $G(\textbf{a}) =x$ and that from \cite[Definition 2]{CakalliSequentialdefinitionsofcompactness}  a subset $A$ of $X$ is called {\em G-countably compact} if any infinite subset of $A$ has at least one
G-sequential accumulation point in $A$.

We now prove the following result.
\begin{theorem}\label{Productseqcompact} The product of $G$-countably compact subsets of $X$ is still $G$-countably compact.
\end{theorem}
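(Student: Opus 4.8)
The plan is to mirror the structure of the proof of Theorem \ref{Theoprodseqcomp}, but working with $G$-accumulation points of infinite subsets instead of $G$-limits of arbitrary sequences. Let $A$ and $B$ be $G$-countably compact subsets of $X$, and let $S\subseteq A\times B$ be an infinite subset. The goal is to produce a single point $(u,v)\in A\times B$ that is a $G$-accumulation point of $S$, i.e.\ a sequence of points of $S\setminus\{(u,v)\}$ that is $G$-convergent to $(u,v)$.

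First I would apply $\pi_1$ to $S$. Here there are two cases. If $\pi_1(S)$ is infinite, then by $G$-countable compactness of $A$ it has a $G$-accumulation point $u\in A$, so there is a sequence $\textbf{x}=(x_n)$ of distinct points in $\pi_1(S)\setminus\{u\}$ with $G(\textbf{x})=u$. If instead $\pi_1(S)$ is finite, then some fibre $\{(a_0,b)\in S\}$ is infinite, which reduces the whole problem to the one-variable situation in the second coordinate; I would dispose of this degenerate case separately. Assuming the generic case, I would lift the $x_n$ back to $S$: choose points $(x_n,y_n)\in S$ one for each $n$. Then I would look at the second-coordinate sequence $\textbf{y}=(y_n)$.

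The next step is to extract a $G$-accumulation point in $B$. The natural move is to apply $G$-countable compactness of $B$ to the set $\{y_n\}$: provided $\{y_n\}$ is infinite it has a $G$-accumulation point $v\in B$, furnished by a subsequence $\textbf{z}$ of the $y$-indices with $G(\pi_2(\textbf{z}))=v$. To conclude I need the first coordinate to remain $G$-convergent to $u$ along this passage to a subsequence, which is exactly where the hypothesis that $G$ preserves the $G$-convergence of subsequences is used, giving $G(\pi_1(\textbf{z}))=u$, hence $G(\textbf{z})=(u,v)\in A\times B$ along the lines of Theorem \ref{Theoprodseqcomp}. Notice, however, that the statement as written does not assume $G$ preserves subsequential $G$-convergence; if the intended proof really omits that hypothesis I would instead argue purely from accumulation points without subsequence extraction, which is more delicate.

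The main obstacle I anticipate is bookkeeping to guarantee genuine accumulation rather than mere convergence: I must ensure the terms used are \emph{distinct} from the candidate limit and infinitely many, so that $(u,v)$ qualifies as a $G$-accumulation point and not just a $G$-limit of a possibly eventually-constant sequence. Concretely, when $\{y_n\}$ turns out to be finite, the $x_n$-values must still supply infinitely many points of $S$ differing from $(u,v)$ in the first coordinate, and I would handle that sub-case by pigeonholing an infinite constant-$y$ subsequence and invoking the first-coordinate accumulation directly. Managing these finiteness dichotomies cleanly, while keeping the subsequence-preservation hypothesis doing its job in the first coordinate, is the part that requires the most care; the algebraic closure under the operations in $\Omega_1$ and $\Omega_2$, if needed, would follow routinely as in the proofs of Theorem \ref{TogrupT1T2} and is not the crux here.
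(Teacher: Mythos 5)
Your proposal does not close, and it misses the single idea the paper's argument actually turns on. The paper's proof never tries to control the second coordinates of the points of the infinite set $S\subseteq A\times B$ at all: after finding (without loss of generality) a $G$-accumulation point $a$ of the infinite set $\pi_1(S)\subseteq A$, witnessed by a sequence $\textbf{a}=(a_n)$ of points of $\pi_1(S)\setminus\{a\}$ with $G(\textbf{a})=a$, it fixes one point $b\in\pi_2(S)$ and pairs $\textbf{a}$ with the \emph{constant} sequence $\textbf{b}=(b,b,\dots)$. Since $G$ is regular (a standing assumption in the paper), $G(\textbf{b})=b$, hence $G(\textbf{a},\textbf{b})=(a,b)\in A\times B$, and every term $(a_n,b)$ differs from $(a,b)$ because $a_n\neq a$. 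This constant-sequence trick is exactly why the theorem is stated without the hypothesis that $G$ preserves the $G$-convergence of subsequences: no subsequence is ever extracted, so no second limit has to survive a passage to a subsequence. Your main route, by contrast, explicitly needs that extra hypothesis; you notice it is absent from the statement, but the ``more delicate'' accumulation-point argument you defer to is never given, and an acknowledged missing argument is still a gap.

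Moreover, even if one grants the subsequence-preservation hypothesis, your second step fails as described. Applying $G$-countable compactness of $B$ to the infinite set $\{y_n\}$ yields a point $v\in B$ together with a sequence of points of the \emph{set} $\{y_n\}\setminus\{v\}$ that $G$-converges to $v$; nothing makes that sequence a subsequence of $(y_n)$ --- its terms may repeat and occur in any order --- so the subsequence hypothesis gives you no control over the corresponding first coordinates, and $G(\pi_1(\textbf{z}))=u$ does not follow. To be fair, your insistence on lifting the $x_n$ to genuine points $(x_n,y_n)\in S$ is responding to a real weakness: the paper's own sequence $(a_n,b)$ is asserted to lie in $S$, when in general it only lies in $\pi_1(S)\times\pi_2(S)$, so as written the paper exhibits $(a,b)$ as a $G$-accumulation point of $A\times B$ rather than of $S$ itself. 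But your proposal does not repair this defect either; it trades it for a dependence on a hypothesis the statement does not contain, plus an unfinished case analysis. A complete argument would have to combine your lifting step with a device, such as the paper's constant-sequence trick applied inside a single infinite fibre when $\pi_1(S)$ is finite, that avoids ever taking a limit along the lifted second coordinates.
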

\begin{proof} Let $A$ and $B$ be G-countably compact subsets of $X$. If $U\subseteq A\times B$ is an infinite subset, then at least one of the subsets $\pi_1(U)\subseteq A$ or $\pi_2(U)\subseteq B$ is infinite. Suppose that $\pi_1(U)\subseteq A$ is infinite. Since $A$ is G-countably compact,  $\pi_1(U)$  has at least one
G-sequential accumulation point $a$ in $A$. Hence there is a sequence $\textbf{a}$ of points in $\pi_1(U)\setminus \{a\}$ such that $G(\textbf{a})=a$. Then we have a sequence $\textbf{x}=(\textbf{a},\textbf{b})$ of  points in $\in U\backslash (a,b)$ such that $G(\textbf{x})=(a,b)$  where $b\in \pi_2(U)$ and $\textbf{b}$ is the constant sequence $\textbf{b}=(b,b,\dots)$.
\end{proof}

\begin{theorem}\label{Closedseqcountablecompactompact}  If $X$ is $G$-countably compact, then any $G$-closed subset of $X\times X$ is $G$-countably compact.
\end{theorem}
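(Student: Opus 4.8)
The plan is to mirror the structure of Theorem~\ref{Theoclosedseqcompact}, replacing the $G$-compactness machinery with the $G$-countable compactness one. A pleasant feature worth flagging at the outset is that the hypothesis requiring $G$ to preserve the $G$-convergence of subsequences should \emph{not} be needed here: unlike $G$-compactness, the notion of $G$-countable compactness is phrased directly in terms of $G$-accumulation points rather than of extracted subsequences, so no subsequence bookkeeping enters the argument.

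First I would observe that, since $X$ is $G$-countably compact, Theorem~\ref{Productseqcompact} immediately yields that the product $X\times X$ is $G$-countably compact. Now let $A$ be a $G$-closed subset of $X\times X$ and let $U\subseteq A$ be an arbitrary infinite subset. Since $U$ is also an infinite subset of the $G$-countably compact space $X\times X$, it has at least one $G$-accumulation point $p=(a,b)$ in $X\times X$; that is, there exists a sequence $\textbf{x}$ of points of $U\backslash\{p\}$ with $G(\textbf{x})=p$.

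The key step is then to push this accumulation point back inside $A$. Because $U\subseteq A$, the witnessing sequence $\textbf{x}$ consists of points of $A$ and $G$-converges to $p$, so $p\in[A]_G$. As $A$ is $G$-closed we have $[A]_G\subseteq A$, whence $p\in A$. Thus $p$ is a $G$-accumulation point of $U$ that lies in $A$, and since $U$ was an arbitrary infinite subset of $A$, this establishes that $A$ is $G$-countably compact.

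The only genuine obstacle, modest as it is, will be the bookkeeping verification that the witnessing sequence truly lies in $U\backslash\{p\}\subseteq A\backslash\{p\}$ and therefore in $A$; once this is in place, the $G$-closedness of $A$ carries out all the remaining work through the inclusion $[A]_G\subseteq A$. Since no subsequence extraction is ever performed, this is precisely why the subsequence-preservation hypothesis that appears in Theorem~\ref{Theoclosedseqcompact} can be omitted from the present statement.
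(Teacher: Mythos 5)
Your proof is correct and follows essentially the same route as the paper: invoke Theorem~\ref{Productseqcompact} to get that $X\times X$ is $G$-countably compact, take a $G$-accumulation point of the infinite subset there, and use the $G$-closedness of $A$ to conclude that this accumulation point lies in $A$. In fact your write-up is cleaner than the paper's (which mistakenly calls the infinite set a subset of $X$ rather than of $A$), and your observation that no subsequence-preservation hypothesis is needed matches the paper's statement.
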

\begin{proof} Let $X$ be $G$-countably compact,  $A\subseteq X\times X$  a $G$-closed subset and $B$ an infinite subset of $X$.  Since by Theorem \ref{Productseqcompact} $X\times X$ is $G$-countably compact, $B$ has an accumulation point $x\in X$ and hence there is a sequence $\textbf{a}$ of points in $B\backslash \{x\}$ such that $G(\textbf{a})=x$. Since $A$ is $G$-sequentailly closed $x\in A$.  Hence $A$ is $G$-countably compact.
\end{proof}

\begin{corollary}\label{Corclosedcompat} If $X$ is  $G$-countably compact, then we have the following:

1. If $f\colon X\rightarrow X$ is a morphism of groups with operations and  $G$-continuous, then $A=\{(x,y)\mid f(x)=f(y)\}$ is a $G$-countably compact subgroup with operation $X\times X$.

2.  $\Delta X=\{(x,x)\mid x\in X\}$ is a $G$-countably compact  subgroup with operations of $X\times X$.

3.  For the $G$-continuous morphisms
$f,g\colon X\rightarrow X$ of  groups with operations, $A=\{x\in X\mid  f(x)=g(x)\}$ is a $G$-countably compact subgroup with operations of $X$.
\end{corollary}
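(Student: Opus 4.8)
The plan is to mirror the proof of the earlier $G$-compact corollary, replacing each compactness input with its $G$-countably compact counterpart. For parts (1) and (2) I would first appeal to Theorem \ref{TogrupT1T2}: part (1) of that theorem gives that $A=\{(x,y)\mid f(x)=f(y)\}$ is a $G$-closed subgroup with operations of $X\times X$, and part (2) gives the same for $\Delta X$. Having thus secured both the algebraic structure and the $G$-closedness, I would invoke Theorem \ref{Closedseqcountablecompactompact}, which asserts that when $X$ is $G$-countably compact every $G$-closed subset of $X\times X$ is $G$-countably compact. Combining these two facts immediately yields that $A$ and $\Delta X$ are $G$-countably compact subgroups with operations, finishing (1) and (2).

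For part (3) the set $A=\{x\in X\mid f(x)=g(x)\}$ lives in $X$ rather than in $X\times X$, and Theorem \ref{TogrupT1T2}(4) already provides that it is a $G$-closed subgroup with operations of $X$. The only missing ingredient is the single-factor analogue of Theorem \ref{Closedseqcountablecompactompact}: that a $G$-closed subset of a $G$-countably compact space is itself $G$-countably compact. I would supply this directly from the definitions. Given an infinite subset $B\subseteq A$, the $G$-countable compactness of $X$ yields a $G$-accumulation point $x\in X$, so there is a sequence $\textbf{a}=(a_n)$ of points in $B\setminus\{x\}$ with $G(\textbf{a})=x$; since $\textbf{a}$ is then a sequence in $A$ and $A$ is $G$-closed, the point $x$ lies in $A$, and hence $x$ is a $G$-accumulation point of $B$ in $A$. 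This shows $A$ is $G$-countably compact and completes (3).

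The argument for (3) is the only place that is not a pure citation, and even there the obstacle is slight: it reduces to observing that the $G$-closedness of $A$ forces the accumulation point to return to $A$, exactly as in the proof of Theorem \ref{Closedseqcountablecompactompact} but with the product machinery stripped away. Alternatively, should the one-factor closed-subset theorem for $G$-countable compactness already be available in the cited literature (the analogue of \cite[Theorem 1]{CakalliSequentialdefinitionsofcompactness} used for the $G$-compact version), part (3) could be closed by citation in the same way that parts (1) and (2) are closed by Theorems \ref{TogrupT1T2} and \ref{Closedseqcountablecompactompact}.
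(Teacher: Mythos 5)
Your proposal is correct and follows essentially the same route as the paper: parts (1) and (2) by combining Theorem \ref{TogrupT1T2} with Theorem \ref{Closedseqcountablecompactompact}, and part (3) by the fact that a $G$-closed subset of a $G$-countably compact $X$ is $G$-countably compact. The only difference is cosmetic: the paper merely asserts that one-factor fact, whereas you prove it from the definitions (correctly), which is if anything an improvement.
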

\begin{proof} The proofs of (1) and (2) are the results of  Theorems \ref{TogrupT1T2}, \ref{Productseqcompact} and \ref{Closedseqcountablecompactompact}.

  The proof of (3) is obtained by the fact that a $G$-closed subset of $X$ is $G$-countably compact whenever $X$ is $G$-countably compact.
\end{proof}

\begin{theorem} If $A$ is a  $G$-countably compact subgroup with operation of  $X$ and   $f\colon X\rightarrow X$ is a $G$-continuous, then the graph set  $B=\{(a,f(a))\mid  a\in A\}$ is a $G$-countably compact subgroup with operations.
\end{theorem}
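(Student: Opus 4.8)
The plan is to verify the two assertions separately: first that $B$ is a subgroup with operations of $X\times X$, and then that $B$ is $G$-countably compact. The subgroup-with-operations part requires no new work, since $B$ is precisely the graph of the morphism $f$ restricted to $A$, which is exactly the content of \cite[Theorem 2.19]{Mu-Ca-seqconnecttopgpwitoper}; I would simply invoke that result as in the $G$-compact analogue. Thus the whole argument reduces to showing $G$-countable compactness, and here I would follow the pattern of Theorem \ref{Productseqcompact} rather than the subsequence-based proofs, since the $G$-countably compact notion is built from infinite subsets and $G$-accumulation points and needs no subsequence-preservation hypothesis (consistent with the fact that the statement imposes none on $G$).

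For the core argument, let $U\subseteq B$ be an arbitrary infinite subset. First I would observe that $\pi_1$ restricted to $B$ is injective, because $(a,f(a))\mapsto a$ recovers the first coordinate and distinct points of $B$ have distinct first coordinates; hence $\pi_1(U)\subseteq A$ is again infinite. Since $A$ is $G$-countably compact, $\pi_1(U)$ has a $G$-accumulation point $a\in A$, so there is a sequence $\textbf{a}=(a_n)$ of points in $\pi_1(U)\setminus\{a\}$ with $G(\textbf{a})=a$. Lifting each $a_n$ back to the graph gives the sequence $\textbf{x}=(a_n,f(a_n))$ of points of $U$; since $a_n\neq a$ their first coordinates differ from $a$, so every term lies in $U\setminus\{(a,f(a))\}$. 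Using the coordinatewise behaviour of $G$ on the product together with the $G$-continuity of $f$, I would then compute
\[
G(\textbf{x})=\bigl(G(\pi_1(\textbf{x})),G(\pi_2(\textbf{x}))\bigr)=\bigl(G(\textbf{a}),G(f(\textbf{a}))\bigr)=\bigl(a,f(G(\textbf{a}))\bigr)=(a,f(a)),
\]
which lies in $B$. This exhibits $(a,f(a))$ as a $G$-accumulation point of $U$ inside $B$, and since $U$ was an arbitrary infinite subset of $B$, it shows $B$ is $G$-countably compact.

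I do not expect a serious obstacle here, as the argument is essentially a transcription of the product construction in Theorem \ref{Productseqcompact} combined with the graph bookkeeping from the $G$-compact graph theorem. The two points that require care are the injectivity of $\pi_1$ on $B$ (which guarantees that the projected set $\pi_1(U)$ stays infinite, so that $G$-countable compactness of $A$ can be applied) and the correct use of $G$-continuity, namely that $\textbf{a}\in c_G(X)$ forces $f(\textbf{a})$ to be $G$-convergent with $G(f(\textbf{a}))=f(G(\textbf{a}))$, so that the second coordinate of the limit is exactly $f(a)$ and the limit point genuinely belongs to the graph $B$.
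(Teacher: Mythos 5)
Your proposal is correct, and it takes a genuinely different---and in fact sounder---route than the paper's own proof. The paper imitates its product argument (Theorem \ref{Productseqcompact}): it pairs the sequence $\textbf{a}=(a_n)$ of points in $\pi_1(U)\setminus\{x\}$ with the \emph{constant} sequence $\textbf{b}=(y,y,\dots)$, $y=f(x)$, and asserts that $(\textbf{a},\textbf{b})$ is a sequence of points in $U$. That assertion is unjustified: a point $(a_n,f(x))$ lies in the graph $B$ only when $f(a_n)=f(x)$, so in general these points lie neither in $U$ nor even in $B$, and the paper's candidate accumulation point is witnessed by a sequence that may live entirely outside $U$. Your proof avoids this by lifting $\textbf{a}$ to the genuine graph points $(a_n,f(a_n))$---which do lie in $U$, since $U\subseteq B$ and $a_n\in\pi_1(U)$ force $(a_n,f(a_n))\in U$, and which differ from $(a,f(a))$ because their first coordinates do---and then using $G$-continuity of $f$ to identify the second coordinate of the limit, $G(f(\textbf{a}))=f(G(\textbf{a}))=f(a)$; this is exactly the device the paper itself uses in its $G$-compact graph theorem but omitted here. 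You also supply a justification the paper leaves implicit: $\pi_1$ is injective on $B$, so $\pi_1(U)$ is automatically infinite, whereas the paper's ``then, say, $\pi_1(U)$ is an infinite subset'' reads as an unwarranted without-loss-of-generality choice (for a general subset of a product only one projection need be infinite, but for a graph the first one always is). The subgroup-with-operations part is handled the same way in both, by citing \cite[Theorem 2.19]{Mu-Ca-seqconnecttopgpwitoper}, though the paper's proof of this particular theorem silently skips that half. In short, your argument proves the same statement under the same hypotheses (correctly requiring no subsequence-preservation condition) and repairs a real gap in the published proof.
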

\begin{proof} If  $U\subseteq B$ is an infinite subset, then, say, $\pi_1(U)$ is an infinite subset of $A$ and since $A$ is a
$G$-countably compact subset, there exists at least a $G$-accumulation point $x$ in $A$. Hence there is a sequence $\textbf{a}=(a_n)$ of  points in $\pi_1(U)\backslash \{x\}$ such that $G(\textbf{a})=x$. Then for a constant sequence $\textbf{b}=(y,y,\dots)$ with  $y=f(x)$,  $\textbf{x}=(\textbf{a},\textbf{b})$ is a sequence of the points in $ U\backslash \{x,y\}$ such that $G(\textbf{x})=(x,y)\in B$. Hence $(x,y)$ is a $G$-accumulation point of $U$ in $B$ and hence $B$ becomes  $G$-countably compact.
\end{proof}

Finally we can state the following corollary

\begin{corollary} Let $K_0$ be the $G$-connected component of $0\in X$. Then we have the following:

1. If $X$ is $G$-compact then  $K_0$ is a $G$-compact subgroup with operations.

2. If $X$ is $G$-locally compact, then  $K_0$ is a $G$-compact subgroup with operations.

3. If $X$ is $G$-countably compact, then $K_0$ is a $G$-compact subgroup with operations.
\end{corollary}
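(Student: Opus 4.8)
The plan is to separate the two assertions hidden in each item: that $K_0$ is a subgroup with operations, and that it is $G$-compact. The first, together with the fact that $K_0$ is $G$-closed, I would import from the $G$-connectedness theory: in \cite{Mu-Ca-seqconnecttopgpwitoper} (cf. \cite{CakalliGconnectednes}) the $G$-connected component of $0$ is shown to be a $G$-closed (normal) subgroup with operations of $X$, since $0\in K_0$, the $G$-hull of a $G$-connected set is again $G$-connected, and the group operations carry $G$-connected sets to $G$-connected sets. Granting this, in all three parts it remains only to prove that the $G$-closed $G$-connected subgroup $K_0$ is $G$-compact, and throughout I use the standing assumption that $G$ preserves the $G$-convergence of subsequences.

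Part (1) is immediate: since $X$ is $G$-compact and $K_0$ is $G$-closed, \cite[Theorem 1]{CakalliSequentialdefinitionsofcompactness} gives at once that $K_0$ is $G$-compact.

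For parts (2) and (3) the common engine is the same closed-in-compact principle: if I can place $K_0$ inside some $G$-compact subset $K$ of $X$, then, $K_0$ being $G$-closed, \cite[Theorem 1]{CakalliSequentialdefinitionsofcompactness} again yields that $K_0$ is $G$-compact. For part (2) I would produce such a $K$ from $G$-local compactness: choose a $G$-compact $G$-neighbourhood $K$ of $0$, with $0\in U\subseteq K$ for some $G$-open $U$. By Theorem \ref{Theoseqcompactseqclosed}, $K$ is $G$-closed, so $K\cap K_0$ is $G$-closed in $K_0$, while $U\cap K_0$ is a nonempty $G$-open piece of $K_0$ sitting inside $K\cap K_0$. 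The aim is to show $K\cap K_0$ is \emph{simultaneously} $G$-open and $G$-closed in $K_0$; the $G$-connectedness of $K_0$ then forbids a proper nonempty $G$-clopen subset, forcing $K\cap K_0=K_0$ and hence $K_0\subseteq K$. For part (3) I would first note, exactly as in the proof of Corollary \ref{Corclosedcompat}, that a $G$-closed subset of a $G$-countably compact space is $G$-countably compact, so $K_0$ is $G$-countably compact; it then remains to upgrade $G$-countable compactness to $G$-compactness on $K_0$. Given a sequence in $K_0$, if its range is finite a constant subsequence $G$-converges into $K_0$ by regularity; if its range is infinite, $G$-countable compactness supplies a $G$-accumulation point $\ell\in K_0$, and the task is to extract from the original sequence a genuine subsequence $G$-converging to $\ell$, using that $G$ preserves $G$-convergence of subsequences together with the subgroup structure of $K_0$.

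The main obstacle is the same structural difficulty in both (2) and (3): because the $G$-open and $G$-closed sets do not form a topology --- unions of $G$-closed sets need not be $G$-closed and $[[A]_G]_G=[A]_G$ may fail --- the usual topological shortcuts are unavailable and each half of the clopen dichotomy must be argued by hand. Concretely, in (2) the delicate point is establishing that $K\cap K_0$ is $G$-open in $K_0$ (its $G$-closedness comes free from Theorem \ref{Theoseqcompactseqclosed}), for which I would exploit that $G$-local compactness furnishes a \emph{fundamental system} of $G$-compact neighbourhoods and the homogeneity coming from the group-with-operations structure to thicken $U\cap K_0$ to the whole of $K\cap K_0$. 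In (3) the corresponding crux is passing from an accumulation point, which only yields \emph{some} $G$-convergent sequence drawn from the range, to an honest $G$-convergent \emph{subsequence} of the given sequence; this is exactly where the $G$-connectedness of $K_0$ and the preservation of subsequential $G$-limits must be combined, and it is the step I expect to require the most care.
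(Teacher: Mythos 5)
Your part (1) coincides with the paper's argument: $K_0$ is a $G$-closed subgroup with operations by \cite[Theorem 3.3]{Mu-Ca-seqconnecttopgpwitoper}, and a $G$-closed subset of a $G$-compact $X$ is $G$-compact by \cite[Theorem 1]{CakalliSequentialdefinitionsofcompactness}. But for parts (2) and (3) you have taken the statement literally and tried to prove that $K_0$ is actually $G$-\emph{compact}, whereas the paper's own proof establishes only the weaker conclusions: for (2) it cites Theorem \ref{Closedsubsetloccompact}, which yields that $K_0$ is $G$-\emph{locally} compact, and for (3) it cites the fact that a $G$-closed subset of a $G$-countably compact $X$ is $G$-\emph{countably} compact. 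The ``$G$-compact'' in items 2 and 3 of the corollary is evidently a misprint, and this matters, because the literal claims you are attempting are false. For (2), take $X=\mathbb{R}$ with $G=\lim$: then $G$-closed sets are the sequentially closed, i.e.\ closed, sets, so $\mathbb{R}$ is $G$-locally compact (closed intervals are $G$-compact neighbourhoods) and $K_0=\mathbb{R}$, which is not $G$-compact. This also pinpoints exactly where your clopen argument breaks: with $K=[-1,1]$, the set $K\cap K_0=[-1,1]$ is $G$-closed in $K_0$ but there is no way to make it $G$-open in $K_0$, so the step you defer (``show $K\cap K_0$ is simultaneously $G$-open and $G$-closed'') is not a gap to be filled with more care but an impossibility; a $G$-compact neighbourhood of $0$ simply need not absorb the whole component.

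Part (3) has the same defect. A $G$-accumulation point of the range of a sequence furnishes a $G$-convergent sequence of \emph{values} drawn from that range, not a subsequence of the given sequence, and for a general method $G$ (even one preserving $G$-convergence of subsequences) there is no mechanism to convert one into the other; moreover the upgrade itself fails already for $G=\lim$ on topological groups, since there exist compact, hence $G$-countably compact, topological groups (e.g.\ products of the form $\{0,1\}^{\kappa}$ for large $\kappa$) that are not sequentially compact, so their $G$-closed subsets need not be $G$-compact. The repair is simply to read the conclusions of items 2 and 3 as ``$G$-locally compact'' and ``$G$-countably compact'' respectively; with that reading, the ingredients you already imported --- $K_0$ is a $G$-closed subgroup with operations, Theorem \ref{Closedsubsetloccompact}, and the closed-subset-inherits-countable-compactness observation from your own treatment of Corollary \ref{Corclosedcompat} --- finish the proof immediately, which is precisely what the paper does.
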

\begin{proof}
Since by \cite[Theorem 3.3]{Mu-Ca-seqconnecttopgpwitoper}, $K_0$ is $G$-closed subgroup with operations of $X$,

1.  follows from the fact that a $G$-closed subset of $X$ is $G$-compact whenever $X$ is $G$-compact \cite[Theorem 1]{CakalliSequentialdefinitionsofcompactness};

2.  is a result of   Theorems \ref{Closedsubsetloccompact};

3. follows from the fact that a $G$-closed subset of $X$ is $G$-countably compact whenever $X$ is $G$-countably compact.
\end{proof}

\section{Conclusion}
In this paper we consider  different kinds of $G$-compactness  for   a category of topological groups with operations which include topological groups.  Some of the results  are even new in topological group case.

To generalize the results of this paper to more general case of topological $\mathbb{T}$ algebras,   we first recall a fact on semi-abelian categories: The notion of semi-abelian category as proposed in \cite{Janelidze} (see also \cite{Raikov} and \cite{Linden}) has typical categorical properties such as possessing finite products, coproducts, a zero
object and hence kernels, pullbacks of monomorphisms and coequalizers of kernel pairs. Groups, rings, algebras and all abelian categories are semi-abelian, say.

In \cite{Bor-Man}  for a certain algebraic theory  the term `algebraic model'  is used for the objects of the semi-abelian category.  Let $\mathbb{T}$ be an algebraic theory whose category is semi-abelian. A {\em  topological model} of $\mathbb{T}$ is a model of the theory
of $\mathbb{T}$ with a topology which makes all the operations of the theory continuous. The category $\Top^{\mathbb{T}}$, for a semi-abelian theory $\mathbb{T}$, is generally no longer semi-abelian because it is not Bar exact. But in  \cite{Bor-Man} the category $\Top^\mathbb{T}$ of the topological models
$\mathbb{T}$ is studied  and some classical results in topological groups is generalized to this category  $\Top^\mathbb{T}$. For example when $\mathbb{T}$ is the theory of groups, then  $\Top^\mathbb{T}$ becomes the category of topological groups and we obtain the results for  topological groups.

Hence the methods of the  paper \cite{Bor-Man} could be  be useful to  deal with  $\Top^\mathbb{T}$ and obtain more general results for topological $\mathbb{T}$ algebras.


\end{document}